\def\R {{\mathbb R}}
\def\H01{{H_0^1(\Omega)}}
\def\L2{{L^2(\Omega)}}
\newtheorem{theorem}{Theorem}[section] 
\newtheorem{definition}{Definition}[section]
\newtheorem{lemma}{Lemma}[section]
\newtheorem{remark}{Remark}[section]
\newtheorem{proposition}{Proposition}
\newtheorem{algorithm}{Algorithm}[section]
\newcommand{\vp}{\varphi}
\newcommand{\Hc}{\mathcal{H}}
\newcommand{\Rb}{\mathbb{R}}
\newcommand{\Beq}{\begin{equation}}
\newcommand{\Eeq}{\end{equation}}
\newcommand{\beq}{\begin{equation*}}
\newcommand{\eeq}{\end{equation*}}
\newcommand{\bal}{\begin{align}}
\newcommand{\eal}{\end{align}}
\renewcommand{\O}{\Omega}
\renewcommand{\L}{\langle}
\newcommand{\g}{\gamma}
\newcommand{\bp}{\begin{prob}}
\newcommand{\ep}{\end{prob}}
\newcommand{\bpr}{\begin{proof}}
\newcommand{\epr}{\end{proof}}
\newcommand{\bel}[1]{\begin{equation}\label{#1}}
\newcommand{\ee}{\end{equation}}
\title{A sparsity-based nonlinear reconstruction method for
two-photon photoacoustic tomography}
\author{ Madhu Gupta\thanks{Department of Mathematics, University of Texas at Arlington, TX 76019, USA. madhu.gupta@mavs.uta.edu} \and Rohit Kumar Mishra\thanks{Department of Mathematics, University of Texas at Arlington, TX 76019, USA. rohit.mishra@uta.edu} \and Souvik Roy\thanks{Department of Mathematics, University of Texas at Arlington, TX 76019, USA. 	souvik.roy@uta.edu}}
\date{}
\begin{document}
\maketitle
\begin{abstract}
We present a new nonlinear optimization approach for the sparse reconstruction of single-photon absorption and two-photon absorption coefficients in the photoacoustic tomography (PAT). This framework comprises of minimizing an objective functional involving a least squares fit of the interior pressure field data corresponding to two boundary source functions, where the absorption coefficients and the photon density are related through a semi-linear elliptic partial differential equation (PDE) arising in PAT. Further, the objective functional consists of an $L^1$ regularization term that promotes sparsity patterns in absorption coefficients.  The motivation for this framework primarily comes from some recent works related to solving inverse problems in acousto-electric tomography and current density impedance tomography. We provide a new proof of existence and uniqueness of a solution to the semi-linear PDE. Further, a proximal method, involving a Picard solver for the semi-linear PDE and its adjoint, is used to solve the optimization problem. Several numerical experiments are presented to demonstrate the effectiveness of the proposed framework.
\end{abstract}

\noindent \textbf{Keywords:} Inverse problems, PDE-constrained optimization, proximal methods, sparsity patterns, two-photon photoacoustic tomography.\\

\noindent \textbf{MSC:} 35R30, 49J20, 49K20, 65M08, 82C31
\section{Introduction}
The hybrid medical imaging problems have attracted the research community a lot in the last few decades. The idea behind hybrid imaging methods is to combine a high contrast modality and a high resolution modality to get images with high contrast and resolution simultaneously. High contrast modalities like electrical impedance tomography (EIT) are used primarily for imaging electrical, optical or elastic properties of biological tissues because these properties vary greatly between healthy and unhealthy tissues. On the other hand, modalities like magnetic resonance imaging (MRI) and ultrasound are used to provide better resolution. Therefore, the inversion process for hybrid imaging problems involves two steps coming from each modality discussed above.  For a more detailed discussion on hybrid imaging techniques, please see the review articles \cite{Bal_Review, Kuchment_Review}.


One of the hybrid imaging modalities is photoacoustic tomography (PAT) that couples electromagnetic waves together with ultrasound. PAT takes advantage of the photoacoustic effect to convert absorbed optical energy into acoustic waves. In PAT, near infrared (NIR) light propagates into a medium of interest and a fraction of the incoming light energy is absorbed, which results in local heating and subsequent cooling of the medium. Due to this heating and cooling phenomenon, acoustic waves are generated that are recorded at the boundary of the medium. The inverse problem is reconstruct the diffusion, absorption and Gr\"{u}neisen coefficients from these acoustic measurements, for more details on the subject see \cite{Ammari_book_2008,Bal_Ren_2011, bal_uhlmann,Kuchment_Kunyansky,Li_Wang,Wang_2004, Wang_2008, Xu_Gaik1, Xu_Gaik2, Xua_Wang} and references therein. 

The PAT technology has two main categories, namely, photoacoustic microscopy (PAM) and photoacoustic computed tomography (PACT). Generally, PAM is known to provide high resolution within a depth of several millimeters. On the other hand, PACT gives a larger penetration depth beyond one centimeter, but at the expense of inferior spatial resolution. To overcome the limitation of PAM, non-linear mechanisms have been introduced such as two-photon absorption \cite{Ren_Zhang,Ren2018,Yamoka}. The phenomenon when an electron transfers to an excited state after simultaneously absorbing two photons can be defined as two-photon absorption. An imaging modality where one tries to recover  optical properties of  heterogeneous media (such as biological tissues) using the photoacoustic effect resulting from two photon absorption is known as two-photon photoacoustic tomography (2P-PAT) \cite{Lai_Lee, Langer, Urban}.  Even though the occurrence of two-photon absorption (in healthy biological tissues)  is less frequent than single-photon absorption, two-photon absorption is extremely useful in practice, see for instance \cite{Denk, Peter, Ying, Webb}.

The mathematical formulation of 2P-PAT was first introduced in \cite{Ren_Zhang,Ren2018}, where the authors consider an optically absorbing and scattering medium $\O \subset \Rb^n\ (n \geq 2)$. Denoting the density of photons at a point $x\in\Omega$ as $u(x)$, it was shown that $u(x)$ solves the following semi-linear diffusion equation
\begin{equation}\label{eq: semi-linear equation}
\begin{aligned}
-\nabla\cdot(D(x)\nabla{u(x)})+\sigma(x) u(x) + \mu(x) |u(x)|u(x)&=0, \quad\qquad &  \mbox{in }\ \ \Omega, \\
u(x)&=g(x), \quad & \mbox{ on } \partial \Omega,
\end{aligned}
\end{equation}
where $D(x)$ denotes the diffusion coefficient, $\sigma(x)$ and $\mu(x)$ represent the single-photon and the two-photon absorption coefficients respectively, and the function $g(x)$ is the illumination pattern on the boundary $\partial \O$. The term $\mu(x)|u(x)|$ is the total two-photon absorption coefficient, where the absolute value of $u$ is taken to ensure that the total two-photon absorption coefficient is non-negative \cite{Ren2018}.

The medium $\O$ heats up due to absorption of some portion of incoming photons that results in thermal expansion of the medium. The medium cools down after photons leave the medium and this results in contraction of the medium, which gives rise to acoustic waves. This effect is known as the photoacoustic effect. This photoacoustic effect generates an acoustic wave pressure field $\Hc^{\sigma, \mu}$ is given by (see \cite{bal_uhlmann, Fisher_Schotland})
\begin{align}
\Hc^{\sigma, \mu}(x) =  \Gamma(x) \left[\sigma(x) u(x) + \mu(x) |u(x)|u(x)\right], \quad \mbox{ for } x \in \Omega,
\end{align}
where $\Gamma$ is the Gr\"{u}neisen coefficient that determines the efficiency of the photoacoustic effect. The aim is to recover the optical properties of the medium $\O$ from the  measured acoustic wave signals on the surface of the medium. In this process, the first step involves the recovery of the initial acoustic wave pressure field $\Hc^{\sigma, \mu}$ from measured data, as usually done in a standard  PAT. In the second step of 2P-PAT, the goal is to reconstruct the optical coefficients $D$, $\sigma$, $\mu$ and $\Gamma$ from the information of internal data $\Hc^{\sigma, \mu}$. This step is usually known as the quantitative step. Recently, the experimental aspect of 2P-PAT have been studied by several authors and it has been shown that the effect of two-photon absorption can be measured accurately, we refer to \cite{Lai_Lee, Langer,  Urban, Yamoka1, Yamoka2, Yamoka3} for detailed discussions. Thus, we assume that the first step in the 2P-PAT process has been accomplished to obtain the initial acoustic wave pressure field $\Hc^{\sigma, \mu}$. For the second step of recovery of the optical coefficients, detailed mathematical and numerical analysis has been done in very few works \cite{Ren_Zhang, Ren2018, Yu}. It has been shown in \cite{Ren2018} that simultaneous reconstruction of all the four coefficients $D$, $\sigma$, $\mu, \Gamma$ is not possible. In \cite{Ren_Zhang,Ren2018}, the authors show that given $D,\Gamma$, one of $\sigma$ and $\mu$ can be reconstructed with internal data corresponding to one boundary illumination pattern and reconstruction of both coefficients require two sets of 
internal datum. The authors also present two reconstruction algorithms for reconstructing $\sigma,\mu$. 

There are three major drawbacks of the existing reconstruction algorithms for 2P-PAT: First, four sets of internal datum are used for reconstructing two coefficients. While this gives better reconstructions, it is not conforming with the theoretical requirement of only two sets of internal datum. Secondly, in the presence of 5\% noise in the data, the reconstructions of $\mu$ exhibit severe artifacts. Thirdly, there is no evidence of the algorithms performing well to reconstruct complex objects with high contrast such as holes and inclusions. In this article, we aim at using a robust computational framework that has the ability to provide high contrast and high resolution reconstructions of objects with holes and inclusions. The framework is based on a non-linear PDE-constrained optimization technique, developed recently  \cite{Roy_AET, Roy_CDII, Roy_AET2} to study the aforementioned hybrid inverse problem for 2P-PAT. We start by formulating a minimization problem where we aim to determine $\sigma$ and $\mu$ given the interior acoustic wave pressure field $\Hc^{\sigma,\mu}$. Additionally, we also assume that the variations in the values of absorption coefficients from known background absorption coefficients demonstrate sparsity patterns. These patterns arise frequently in several tomographic imaging scenarios, for e.g. in blood vessel tomographic reconstructions \cite{Yang}. The sparsity is incorporated in our model through an $L^2-L^1$ regularization term in our objective functional. An $H^1$ regularization term is also introduced in the functional that helps reducing artifacts. We provide a comprehensive theoretical analysis of our optimization framework. We provide a new proof for the existence of solutions of \eqref{eq: semi-linear equation} with higher regularity, under the assumption that $g\geq 0$, using a fixed point approach. We also prove the existence of minimizers of our minimization problem. We solve the optimization problem using a variable inertial proximal scheme that efficiently handles the non-differentiable $L^1$ regularization term in the objective functional. Finally, we demonstrate the applicability of our reconstruction approach by implementing scheme to several examples.

The article is organized as follows: In Section \ref{sec:minimization problem}, we formulate the minimization problem for the 2P-PAT reconstruction problem. In Section \ref{sec:Theory}, we present some theoretical results about our optimization problem and we also characterize the optimality system. The numerical schemes to solve the forward problem and the optimization problem are discussed in Section \ref{sec:Numerical schemes}. In Section \ref{sec: numerical results}, we present simulation results of our 2P-PAT framework. A section on conclusions completes our work.

\section{A minimization problem}\label{sec:minimization problem}

In this section, we describe the minimization problem corresponding to the 2P-PAT reconstruction problem. We assume $\O$ to be bounded domain in $\Rb^2$.
The authors in \cite{Ren2018} show that, under the assumptions of the boundary function $g \geq 0$, there exists a non-negative solution $u$ of \eqref{eq: semi-linear equation} in $H^1_g(\O)$. Since $g$ represents the density of photons, $g$ is non-negative. Therefore, instead of the photon propagation equation \eqref{eq: semi-linear equation}, we consider the following boundary value problem
\begin{align}\label{eq: modified semi-linear equation}
\begin{array}{rr}
-\nabla\cdot(D(x)\nabla{u(x)})+\sigma(x) u(x) + \mu(x) u^2(x)&=0, \quad\qquad   \mbox{in }\Omega, \\
u(x)&=g(x) \quad \mbox{ on } \partial \Omega
\end{array}
\end{align}
as the model for photon propagation in $\O$. We assume that the diffusion coefficient $D\in W^{1, \infty}(\Omega)$ is known.  Throughout the article, we assume that the absorption coefficients $\sigma$ and $\mu$ belong to the function spaces $L^\sigma_{ad}$ and $L^\mu_{ad}$ respectively, where
 \[
\begin{aligned}
&L^\sigma_{ad} = \lbrace q(x) \in H^1(\Omega): a_\sigma \leq q(x) \leq b_\sigma,~ \forall x\in\Omega,~ a_\sigma,b_\sigma > 0 \rbrace,\\
&L^\mu_{ad} = \lbrace q(x) \in H^1(\Omega): a_\mu \leq q(x) \leq b_\mu,~ \forall x\in\Omega,
~a_\mu,b_\mu > 0  \rbrace.
\end{aligned}
\]
Then the aim is to recover both absorption coefficients $\sigma $ and $\mu$ from the knowledge of two sets boundary illumination functions $g_1,g_2$ and the corresponding initial acoustic wave pressure field $\mathcal{H}_1^{\sigma, \mu},\mathcal{H}_2^{\sigma, \mu}$, where
\begin{align}\label{eq:initial pressure field}
\Hc^{\sigma, \mu}(x) =  \Gamma(x) \left[\sigma(x) u(x) + \mu(x) u^2(x)\right], \quad \mbox{ for } x \in \Omega.
\end{align}
 For a known diffusion coefficient $D$, the equation \eqref{eq: semi-linear equation} can be represented as follows
\begin{equation}\label{eq:eq1}
\begin{aligned}
\mathcal{L}(u, \sigma, \mu, g) =  0.
\end{aligned} 
\end{equation}
We will use an optimization based approach to reconstruct the coefficients  $ \sigma(x)$ and $\mu(x)$. We start by defining the following cost functional
\begin{equation}\label{eq: definition of cost functional}
\begin{aligned}
J(\sigma, \mu, u_1, u_2) &= \sum_{j=1}^2\frac{\alpha_j}{2}\|\mathcal{H}_j^{ \sigma, \mu} - G_j^{\delta}\|^2+\frac{\xi_1}{2}\|\sigma-\sigma_b\|_{H^1(\Omega)}^2 +\frac{\xi_2}{2}\|\mu-\mu_b\|_{H^1(\Omega)}^2 \\
&+\gamma_1\|\sigma-\sigma_b\|_{L^1}  +\gamma_2\|\mu-\mu_b\|_{L^1},
\end{aligned}
\end{equation}
where $u_1,u_2$ satisfy \eqref{eq: semi-linear equation} with boundary source functions $g_1$, $g_2$ respectively, $\sigma_b,\mu_b$ are known background absorption coefficients and $G_j^\delta,~ j=1,2$ are the (possibly noisy) measured initial acoustic wave pressure fields.

We now consider the following constrained minimization problem associated to the above cost functional
\begin{align}\label{eq:minimization problem}
\min_{\sigma, \mu}&J(\sigma, \mu, u_1, u_2),\\
\mbox{s.t.}\ &\mathcal{L}(u_1, \sigma, \mu, g_1)=0,
\\&\mathcal{L}(u_2, \sigma, \mu, g_2)=0. \tag{P}
\end{align}
The first term in the functional \eqref{eq: definition of cost functional} represents a least-square data fitting term for obtaining $\sigma,\mu$ such that $\mathcal{H}_j^{ \sigma, \mu} \approx G_j^\delta,~ j=1,2$. The regularization terms $\|\sigma-\sigma_b\|_{L^1}$ and  $\|\mu-\mu_b\|_{L^1}$ in the above functional \eqref{eq: definition of cost functional}  implement $L^1$ regularization of the minimization problem that helps promote sparsity patterns in the reconstruction of absorption coefficients. The use of such $L^1$ regularization terms has been shown to obtain high contrast in the reconstructions \cite{Roy_CDII, Roy_AET2}. The $H^1$ regularization terms $\|\sigma-\sigma_b\|^2_{H^1}$ and  $\|\mu-\mu_b\|^2_{H^1}$ help in denoising and removal of artifacts, thus, promoting high resolution.

\section{Theory of the minimization problem}\label{sec:Theory}
In this section, we analyze the existence of a solution to the minimization problem \eqref{eq:minimization problem} and, further, characterize this solution through a first-order optimality system. We refer to this minimization problem as the 2P-PAT sparse reconstruction problem (2PPAT-SR). We begin our discussion with the analysis of the solution of \eqref{eq: modified semi-linear equation}. The existence of solution $u \in H^1_g(\O)$ for the boundary value problem \eqref{eq: modified semi-linear equation} has been established in \cite{Ren2018} under the assumptions that the coefficients $D, \sigma, \mu$ are bounded above and below by some positive constants and the boundary function $g$ is the restriction of a continuous function $\varphi \in C^0(\bar{\O})$. The authors also showed the existence of a regular solution $u \in H_g^3(\O)$ under extra assumptions $D, \sigma, \mu$ are in $H^1(\O)$ and $g$ comes from $\varphi \in C^3(\bar{\O})$. Further, the authors show that $u$ is non-negative corresponding to a non-negative boundary function $g$ is non-negative. 

To prove the existence of minimizer of \eqref{eq: definition of cost functional}, we need $u \in H^2(\O)$. For this purpose, we impose weaker assumptions on the coefficients of \eqref{eq: modified semi-linear equation} and boundary function $g$ compared to the assumptions used in \cite{Ren2018}. 
We present a new proof to the existence and uniqueness of solution $u \in H^2(\O)$ for the boundary value problem \eqref{eq: modified semi-linear equation}. We first recall the following well known fixed point theorem, for reference see \cite[Theorem 4, Section 9.2]{Evans_Book}.
\begin{theorem}[Schaefer's Fixed Point Theorem]\label{th:Fixed point Theorem}
Suppose $A : X \longrightarrow X$ is a continuous and compact mapping. Assume further that the set 
$$\{ u \in X : u = \lambda A[u] \mbox{ for some } 0 \leq \lambda  \leq 1\}$$
is bounded. Then $A$ has a fixed point.
\end{theorem}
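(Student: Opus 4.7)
The plan is to reduce Schaefer's theorem to Schauder's fixed point theorem, which states that a continuous and compact self-map of a nonempty, closed, bounded, convex subset of a Banach space has a fixed point. I will assume here the implicit setting that $X$ is a Banach space, as in Evans. First I would exploit the hypothesis: since the set
\[
S=\{u\in X\st u=\lambda A[u]\ \text{for some}\ \lambda\in[0,1]\}
\]
is bounded, there exists $M_0>0$ with $\|u\|\le M_0$ for every $u\in S$. Pick any $M>M_0$ and let $\overline{B_M}\subset X$ denote the closed ball of radius $M$, which is nonempty, closed, bounded, and convex.

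Next I would introduce the radial retraction $r:X\to\overline{B_M}$ defined by $r(x)=x$ if $\|x\|\le M$ and $r(x)=Mx/\|x\|$ if $\|x\|>M$. This is Lipschitz (with constant $1$), hence continuous. I then form the composition $T=r\circ A:\overline{B_M}\to\overline{B_M}$. Continuity of $T$ follows from continuity of $r$ and $A$. For compactness, note that $A$ sends the bounded set $\overline{B_M}$ to a relatively compact set $\overline{A(\overline{B_M})}$, and $r$ is continuous, so $T(\overline{B_M})\subset r(\overline{A(\overline{B_M})})$ is also relatively compact. By Schauder's fixed point theorem, there exists $u^\ast\in\overline{B_M}$ with $T(u^\ast)=u^\ast$.

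Finally, I would conclude by a dichotomy. If $\|A(u^\ast)\|\le M$ then $r(A(u^\ast))=A(u^\ast)$, giving $u^\ast=A(u^\ast)$, the desired fixed point. Otherwise $\|A(u^\ast)\|>M$, and then
\[
u^\ast=r(A(u^\ast))=\frac{M}{\|A(u^\ast)\|}A(u^\ast)=\lambda A(u^\ast),\qquad \lambda:=\frac{M}{\|A(u^\ast)\|}\in(0,1).
\]
Hence $u^\ast\in S$ and so $\|u^\ast\|\le M_0$; but at the same time $\|u^\ast\|=\lambda\|A(u^\ast)\|=M>M_0$, a contradiction. Therefore the first alternative must hold.

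The routine parts of this plan are the construction of $r$ and the case analysis; the main subtlety I expect is verifying that $T$ is compact even though $r$ is only continuous (not compact), which relies on compactness being preserved when a continuous map is applied to a relatively compact set. A secondary point worth care is that the hypothesis only gives ``boundedness'' of $S$, not a specific bound: one must first extract $M_0$ and then choose $M>M_0$ strictly, since the contradiction at the end requires the strict inequality $M>M_0$ to rule out the bad case.
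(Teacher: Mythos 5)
The paper does not actually prove this statement --- it is quoted as a known result with a citation to Evans (Theorem 4, Section 9.2) --- and your argument is precisely the standard proof given there: reduction to Schauder's fixed point theorem via the radial retraction onto a closed ball of radius $M$ strictly larger than the a priori bound $M_0$, together with the correct observation that the strictness of $M>M_0$ is what closes the final dichotomy. The only blemish is the parenthetical claim that the radial retraction is Lipschitz with constant $1$, which fails in a general Banach space (the sharp constant is $2$); since you use only its continuity, the proof is unaffected.
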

The following theorem gives the existence and uniqueness of solution $u \in H^2(\O)$ of \eqref{eq: modified semi-linear equation}.
\begin{theorem}\label{th:existence of solution}
Let $\O$ be a bounded domain in $\Rb^2$. Assume  $D(x)\in W^{1,\infty}(\Omega)$, $(\sigma(x),\mu(x))\in L^\sigma_{ad}\times L^\mu_{ad}$ and  $g \in H^{3/2}(\partial \O)$ are given. Then the boundary value problem \eqref{eq: modified semi-linear equation} has a unique solution $u$ in $H^1_0(\O)\cap L^4(\O)$. Further, any weak solution $u$ of \eqref{eq: modified semi-linear equation} is also a strong solution, that is, $u \in H^2(\O)$.
\end{theorem}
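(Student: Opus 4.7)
The plan is to apply Schaefer's fixed point theorem (Theorem~\ref{th:Fixed point Theorem}) to a linearised reformulation of \eqref{eq: modified semi-linear equation}. By the trace theorem, fix an extension $G\in H^2(\O)$ of $g$ (chosen non-negative whenever $g\ge 0$, which we adopt as the natural physical assumption inherited from \cite{Ren2018}). Define $A:L^4(\O)\to L^4(\O)$ by $A[v]:=u$, the unique weak solution in $H^1(\O)$, with trace $g$, of the \emph{linear} boundary value problem
\begin{equation*}
-\nabla\cdot\bigl(D\nabla u\bigr)+\bigl(\sigma+\mu v_+\bigr)u=0\quad\text{in }\O,
\end{equation*}
where $v_+:=\max(v,0)$. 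The associated bilinear form is continuous on $H^1(\O)\times H^1(\O)$ (using $v\in L^4$ and the two-dimensional Sobolev embedding $H^1(\O)\hookrightarrow L^4(\O)$) and coercive on $H^1_0(\O)$ (since $D\ge a_D>0$ and $\sigma+\mu v_+\ge a_\sigma>0$), so Lax--Milgram yields $u$. A fixed point $u=A[u]$ that is non-negative satisfies $u_+=u$ and hence solves the original semi-linear problem.

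Two properties of $A$ are immediate. First, the weak maximum principle (zeroth-order coefficient non-negative and $g\ge 0$) gives $A[v]\ge 0$, so any fixed point of $A$ is automatically non-negative. Second, the right-hand side $-\mu v_+ A[v]$ belongs to $L^2(\O)$ (use $v\in L^4$, $A[v]\in H^1\hookrightarrow L^4$), so standard elliptic regularity with $D\in W^{1,\infty}(\O)$ and $g\in H^{3/2}(\partial\O)$ gives $A[v]\in H^2(\O)$ together with the estimate $\|A[v]\|_{H^2}\le C(\|v\|_{L^4}^2+\|g\|_{H^{3/2}})$. The compact embedding $H^2(\O)\hookrightarrow\hookrightarrow L^4(\O)$ then makes $A$ compact, while linear-elliptic stability yields continuity.

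The critical step is the a priori bound on the Schaefer set. If $u=\lambda A[u]$ for some $\lambda\in[0,1]$, then $u\ge 0$ and
\begin{equation*}
-\nabla\cdot\bigl(D\nabla u\bigr)+\sigma u+\mu u^2=0\quad\text{in }\O,\qquad u|_{\partial\O}=\lambda g.
\end{equation*}
Testing with $u-\lambda G\in H^1_0(\O)$ produces the cubic term $\int_\O\mu u^3$ on the left, which is non-negative \emph{because} $u\ge 0$; Young's inequality in the form $u^2 G\le\tfrac{2}{3}u^3+\tfrac{1}{3}G^3$ absorbs the cross term $\lambda\int_\O\mu u^2 G$ into a fraction of $\int_\O\mu u^3$, while the remaining linear cross terms are controlled by $\|G\|_{H^1}$ and $\|G\|_{L^3}$ via Cauchy--Schwarz and Young. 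This yields a $\lambda$-independent bound on $\|u\|_{H^1(\O)}$, hence on $\|u\|_{L^4(\O)}$, and Schaefer's theorem delivers a fixed point $u^*=A[u^*]$.

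For uniqueness, two non-negative solutions $u_1,u_2$ give $w:=u_1-u_2\in H^1_0(\O)$ satisfying $-\nabla\cdot(D\nabla w)+[\sigma+\mu(u_1+u_2)]w=0$, whose zeroth-order coefficient is bounded below by $a_\sigma>0$; the standard energy estimate forces $w\equiv 0$. Finally, for the $H^2$-regularity of any weak solution, I view the PDE as the linear equation $-\nabla\cdot(D\nabla u)+\sigma u=-\mu u^2$ whose right-hand side lies in $L^2(\O)$ (since $u\in H^1\hookrightarrow L^4$), and invoke standard elliptic regularity for operators with $W^{1,\infty}$ principal coefficient and $H^{3/2}$ boundary data. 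The main obstacle throughout is the a priori bound: without first securing $u\ge 0$ via the maximum principle, the sign of $\mu u^3$ in the energy identity is indeterminate and the standard estimate cannot close.
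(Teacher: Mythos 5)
Your proposal is correct, and it reaches the result by Schaefer's theorem as the paper does, but the fixed-point map is genuinely different. The paper homogenizes the boundary data ($u=v+\vp$), moves the whole quadratic term to the right-hand side, and sets $A[v]=w$ where $-\nabla\cdot(D\nabla w)+\vartheta w=-\mu v^2+f$; you instead freeze one factor of the nonlinearity into the zeroth-order coefficient, $A[v]=u$ solving $-\nabla\cdot(D\nabla u)+(\sigma+\mu v_+)u=0$ with trace $g$. Your choice buys something concrete: since the frozen coefficient is non-negative and $g\ge 0$, the weak maximum principle gives $A[v]\ge 0$, so every element of the Schaefer set is non-negative and the cubic term $\int_\O\mu u^3$ in the energy identity has a definite sign, after which your Young-inequality absorption of $\lambda\int_\O\mu u^2G$ closes the a priori bound cleanly and $\lambda$-independently. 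The paper's version of this step multiplies $-\nabla\cdot(D\nabla v)+\vartheta v=-\lambda\mu v^2+\lambda f$ by $v$ and discards $-\lambda\int_\O\mu v^3$ (and replaces $\lambda\int_\O fv$ by $\int_\O fv$) without having established a sign for $v$ or for $fv$ at that point, so your route actually supplies the justification the paper's estimate tacitly relies on. The price is that you must assume $g\ge 0$ explicitly (the theorem as stated does not, though the surrounding text and the paper's own uniqueness argument, which is identical to yours and also restricted to non-negative solutions, already presuppose it), and you should note that the constant in your $H^2$ bound is $v$-independent because you read the equation as $-\nabla\cdot(D\nabla u)+\sigma u=-\mu v_+u$ with the fixed operator on the left; with that reading your compactness and continuity arguments go through exactly as the paper's do.
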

\begin{proof}
In order to solve above equation \eqref{eq: modified semi-linear equation}, we start by reducing it to a homogeneous boundary value problem by putting $u = v +\vp$, where $\vp \in H^2(\O)$ is a possible extension of $g$ from boundary $\partial \O$ to whole $\O$. Then, we can verify that the function $v$ satisfies the equation:
\begin{align}\label{eq:semi-linear homogeneous}
-\nabla\cdot(D(x)\nabla{v(x)})+\vartheta(x)  v + \mu(x) v^2 &= f(x), \quad\qquad   \mbox{in }\Omega, \\
v(x) &=0, \qquad \qquad \mbox{ on } \partial \Omega.
\end{align}
where $\vartheta = \sigma +2\mu \vp$ and $f = \nabla\cdot(D(x)\nabla{\vp})-\sigma \vp -\mu \vp^2$. \\

\noindent For a given $v  \in H^1_0(\O) \cap L^4(\O)$, define $$ F(x):= -\mu(x) v^2(x) +f(x).$$ 

\noindent Using conditions on $\vp$, $D$, $\sigma$ and $\mu$ together with $v  \in L^4(\O)$, we see $F \in L^2(\O)$. Hence there exists a unique $w \in H^1_0(\O)$ (dependent on $v$) satisfying  the following linear boundary value problem, see \cite[Chapter 9]{Brezis_Book} and \cite[Chapter 3, Section 7]{Uraltseva_Book}
\begin{align*}
-\nabla\cdot(D(x)\nabla{w(x)})+\vartheta(x) w(x) &= F(x), \quad\qquad   \mbox{in }\Omega, \\
w(x) &=0, \qquad \qquad \mbox{ on } \partial \Omega
\end{align*} 
with the estimate
$$ \|w\|_{H^2(\O)} \leq C \|F\|_{L^2(\O)}$$
for some constant $C$ (dependent only on coefficient functions and the domain $\O$). \vspace{1mm}\\ 
\noindent This motivates us to define the the operator $A :H^1_0(\O) \cap L^4(\O) \rightarrow H^1_0(\O) \cap L^4(\O)$ given by $A[v] =  w$, where $w$ and $v$ are related in  the same manner as above. Further, we have
\begin{align}\label{eq:estimates on Av}
\|A[v]\|_{H^2(\O)} \leq C  \|F\|_{L^2(\O)} \leq C \left(\|v\|_{L^4(\O)} + \|f\|_{L^2(\O)}\right).
\end{align} 
Note that any fixed point of $A$ will solve  \eqref{eq: modified semi-linear equation} which means to obtain a solution of  \eqref{eq: modified semi-linear equation} it is enough to verify the conditions  of Theorem \ref{th:Fixed point Theorem} for $A$, i.e., we need to show that the operator $A$ is continuous, compact and the set $\{ v \in H^1_0(\O) \cap L^4(\O) : v = \lambda A[v] \mbox{ for some } 0 \leq \lambda  \leq 1\}$ is bounded.\vspace{1mm}

\noindent To show continuity of $A$, let us start with a sequence
$$v_k \rightarrow v, \qquad \mbox{ in }\qquad    H^1_0(\O) \cap L^4(\O)$$
then by the inequality \eqref{eq:estimates on Av}, we have $$\sup_k\|w_k\|_{H^2(\O)} < \infty, \quad \mbox{ where }\quad w_k = A[v_k], \mbox{ for } \ k = 1, \dots   $$
Thus there is a subsequence $\{w_{k_j}\}_{j=1}^\infty$ and a function $w \in  H^1_0(\O) \cap L^4(\O)$ with 
$$ w_{k_j}\rightarrow w, \quad \mbox{ in }\quad H^1_0(\O) \cap L^4(\O).$$
Now, 
\begin{align*}
\int_\O \left(D (\nabla w_{k_j} \cdot \nabla \chi) + \vartheta w_{k_j}\chi  \right)dx  = - \int_{\O} \left(\mu  v_{k_j}^2 \chi - f \chi \right) dx, \quad \forall \chi \in H^1_0(\O). 
\end{align*}
Taking the limit $k_j \rightarrow \infty $ we get 
\begin{align*}
\int_\O \left(D (\nabla w \cdot \nabla \chi) + \vartheta w\chi  \right)dx  = - \int_{\O} \left(\mu  v^2 \chi - f \chi \right) dx, \quad \forall \chi \in H^1_0(\O). 
\end{align*}
Hence $ w = A[v]$. This shows the continuity of $A$. The compactness of $A$ also follows by a similar argument, indeed if $\{v_k\}$ is a bounded sequence in $H^1_0(\O) \cap L^4(\O)$, the estimate \eqref{eq:estimates on Av} shows $\{A[v_k]\}_{k=1}^\infty$ is bounded in $H^2(\O)$ and hence possess a strongly convergent subsequence. The only thing remains to prove is the boundedness of the set:
$$ Y = \left\{ v \in H^1_0(\O) \cap L^4(\O) : v = \lambda A[v] \mbox{ for some } 0 \leq \lambda  \leq 1\right\}.$$
Let $v \in H^1_0(\O) \cap L^4(\O)$ such that 
$$v = \lambda A[v], \quad \mbox{ for some } 0 \leq \lambda \leq 1.$$
\noindent Then $v / \lambda=  A[v] \in H^2(\O) \cap H^1_0(\O) \cap L^4(\O)$ and 
$$-\nabla\cdot(D(x)\nabla{v(x)})+\vartheta(x) v(x) = - \lambda\mu v^2 + \lambda f  , \quad \mbox{ a.e.  in }\O. $$
Multiplying the above relation with $v$ and integrating over $\O$ to get  
\begin{align*}
\int_\O D |\nabla v|^2  + \vartheta |v|^2 &= -\int_\O \lambda \mu v^3 dx +   \int_\O \lambda f v dx \\  & \leq  \int_\O f v  dx = \int_\O \left(\frac{1}{\epsilon} f\right) \left(\epsilon v\right) dx, \quad    \mbox{ for any } \epsilon > 0  \\ 
& \leq \frac{\epsilon^2}{2}\int_\O v^2 dx +  \frac{1}{2 \epsilon^2} \int_\O  f^2 dx. 
\end{align*}
This gives 
\begin{align*}
\int_\O D |\nabla v|^2 + \left(\vartheta -\frac{\epsilon^2}{2}\right) |v|^2 
& \leq  \frac{1}{2 \epsilon^2} \int_\O  f^2 dx. 
\end{align*}
Choose an $ \epsilon >0 $ such that $\left(\vartheta -\frac{\epsilon^2}{2}\right)$ is bounded below by positive constant. Using this information together with the fact $D$ is bounded below by a positive constant, we verified that the set $Y$ is bounded. Hence by Schaefer's Theorem \ref{th:Fixed point Theorem}, we conclude that the operator $A$ has a fixed point $v \in H^2(\O) \cap H^1_g(\O) \cap L^4(\O)$. 

To show the uniqueness of the solution $u$, let $u_1$ and $u_2$ be two non-negative solutions of the boundary value problem \eqref{eq: modified semi-linear equation}. Then $w=u_1-u_2$ satisfies the following boundary value problem
\begin{align*}
-\nabla\cdot(D(x)\nabla{w(x)})+\sigma(x) w(x) + \mu(x) w(x)(u_1(x)+u_2(x))&=0, &  \mbox{ in }\Omega, \\
w(x) &=0, &\mbox{ on } \partial \Omega. 
\end{align*}
Multiplying above equation by $w$ and integrating by part , we get
\begin{align*}
\int_{\Omega} D(x)(\nabla{w(x)})^2+\sigma(x) w^2(x) + \mu(x) w^2(x)(u_1(x)+u_2(x))  dx&=0. 
\end{align*}
Since all coefficients are positive and solutions $u_1$, $u_2$ are non-negative therefore the above relation entails $w \equiv 0$. This proves the uniqueness of solution for boundary value problem \eqref{eq: modified semi-linear equation}.
\end{proof}
\begin{remark}\label{rem:well-defined_functional}
The result in Theorem \ref{th:existence of solution} ensures that the initial acoustic wave pressure field $\mathcal{H}^{\sigma,\mu}$ given by \eqref{eq:initial pressure field} belongs to $L^2(\Omega) \cap L^4(\Omega).$ Thus, the functional $J$ given by \eqref{eq: definition of cost functional} is well-defined.
\end{remark}
The solvability of the 2PPAT-SR inversion problem depends on the type of Dirichlet boundary data  $g_j,~j=1,2$. In this context, we have the following lemma from \cite{Ren2018}
\begin{lemma}[Boundary data]\label{boundary_conditions}
Let $g_i,~i=1,2$ be two sets of boundary conditions with $g_i >0$ and $g_1-g_2 >0$. Then $u_1 \neq u_2$ almost everywhere in $\Omega$ and one can uniquely reconstruct $(\sigma,\mu)$ from the two sets of initial acoustic wave pressure fields $\mathcal{H}_i^{ \sigma, \mu},~ i=1,2$.
 \end{lemma}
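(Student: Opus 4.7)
The plan is to establish the lemma in two stages: first show that $u_1 \neq u_2$ almost everywhere in $\Omega$ by a maximum-principle argument applied to the difference, and then use this separation together with the known values of $D$ and $\Gamma$ to recover $(\sigma, \mu)$ pointwise from a $2 \times 2$ linear system that is derived from the two internal data equations.

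For the first stage, set $w = u_1 - u_2$. Subtracting the two semi-linear equations of the form \eqref{eq: modified semi-linear equation} corresponding to $g_1$ and $g_2$, and using the factorization $u_1^2 - u_2^2 = (u_1 + u_2) w$, the function $w$ satisfies the linear elliptic problem
\begin{equation*}
-\nabla \cdot (D \nabla w) + \bigl[\sigma + \mu(u_1 + u_2)\bigr] w = 0 \text{ in } \Omega, \qquad w = g_1 - g_2 \text{ on } \partial \Omega.
\end{equation*}
By Theorem \ref{th:existence of solution}, the solutions $u_1, u_2$ are non-negative and belong to $H^2(\Omega)$, which embeds into $C^0(\bar\Omega)$ in two dimensions, so the zeroth-order coefficient $\sigma + \mu(u_1 + u_2)$ is non-negative and in $L^\infty(\Omega)$. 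Since $g_1 - g_2 > 0$, the weak maximum principle gives $w \geq 0$ in $\Omega$, and the strong maximum principle upgrades this to $w > 0$ pointwise in $\Omega$. This yields the strict separation $u_1 > u_2$ everywhere in $\Omega$.

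For the second stage, the semi-linear PDE together with \eqref{eq:initial pressure field} rewrites as $-\nabla \cdot (D \nabla u_i) = -\mathcal{H}_i^{\sigma,\mu}/\Gamma$ with $u_i|_{\partial \Omega} = g_i$. Since $D$ and $\Gamma$ are known and $\mathcal{H}_i^{\sigma,\mu} \in L^2(\Omega)$ by Remark \ref{rem:well-defined_functional}, this linear Dirichlet problem has a unique solution; hence $u_1$ and $u_2$ are uniquely determined by the data, independently of the unknowns $(\sigma, \mu)$. Substituting back into \eqref{eq:initial pressure field} produces, at each point $x \in \Omega$, the $2 \times 2$ system
\begin{equation*}
\begin{pmatrix} \Gamma u_1 & \Gamma u_1^2 \\ \Gamma u_2 & \Gamma u_2^2 \end{pmatrix} \begin{pmatrix} \sigma \\ \mu \end{pmatrix} = \begin{pmatrix} \mathcal{H}_1^{\sigma,\mu} \\ \mathcal{H}_2^{\sigma,\mu} \end{pmatrix},
\end{equation*}
whose determinant equals $\Gamma^2 u_1 u_2 (u_2 - u_1)$. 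Applying the strong maximum principle to each $u_i$ separately (using $g_i > 0$ and the fact that the operator $-\nabla \cdot (D \nabla \cdot) + (\sigma + \mu u_i)(\cdot)$ has non-negative zeroth-order coefficient) gives $u_1, u_2 > 0$ in $\Omega$, and the first stage provides $u_1 \neq u_2$, so the determinant is nonzero everywhere in $\Omega$. Inverting the system uniquely recovers the pair $(\sigma, \mu)$.

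I expect the most delicate point to be the careful invocation of the maximum principle. The coefficient $\sigma + \mu(u_1 + u_2)$ is only in $L^\infty(\Omega)$ rather than smooth, so one has to appeal to the version of the weak and strong maximum principles for divergence-form operators with bounded measurable coefficients and a non-negative zeroth-order term; the two-dimensional Sobolev embedding $H^2 \hookrightarrow C^0$ ensured by Theorem \ref{th:existence of solution} is what makes the $u_i$ continuous and permits the pointwise conclusions. A minor secondary point is that the boundary positivity of $g_i$ and of $g_1 - g_2$ should be interpreted in the trace sense (since the hypothesis is posed on $\partial\Omega$), and one needs $g_i \in H^{3/2}(\partial\Omega)$ to lie in the regularity class of Theorem \ref{th:existence of solution} so that the maximum principle can be applied to $u_i$ and $w$ up to the boundary.
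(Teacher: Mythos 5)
Your proof is correct. The paper gives no proof of this lemma --- it is quoted directly from the cited reference of Ren and Zhang --- and your argument (recasting $-\nabla\cdot(D\nabla u_i)=-\mathcal{H}_i^{\sigma,\mu}/\Gamma$ as a linear problem that determines $u_i$ from the data alone, establishing $u_i>0$ and $u_1>u_2$ in $\Omega$ via the weak and strong maximum principles, and then inverting the pointwise $2\times 2$ system with determinant $\Gamma^2 u_1u_2(u_2-u_1)\neq 0$) is essentially the same reconstruction argument used in that source.
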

Next, we state the following lemma about the  Fr\'{e}chet differentiability of the  mapping $u(\sigma, \mu)$ which will be needed later. For proof of this lemma, we refer to \cite[Proposition 2.5]{Ren2018}.
\begin{lemma}\label{differentiable_constraint}
	The map $u(\sigma, \mu)$ defined by \eqref{eq: semi-linear equation} is Fr\'{e}chet differentiable with respect to $\sigma$ and $\mu$ as a mapping from $L^\sigma_{ad}\times L^\mu_{ad}$ to $H^1_g(\Omega)$.
\end{lemma}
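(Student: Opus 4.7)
The plan is to apply the implicit function theorem in Banach spaces. Introduce the operator $F: L^\sigma_{ad} \times L^\mu_{ad} \times H^1_g(\O) \to H^{-1}(\O)$ defined by $F(\sigma, \mu, u) = -\nabla\cdot(D\nabla u) + \sigma u + \mu u^2.$ By Theorem \ref{th:existence of solution}, for each $(\sigma,\mu) \in L^\sigma_{ad} \times L^\mu_{ad}$ there is a unique $u = u(\sigma,\mu) \in H^1_g(\O) \cap H^2(\O)$ with $F(\sigma,\mu,u) = 0$, and the 2D embedding $H^2(\O) \hookrightarrow C^0(\bar{\O})$ gives the a priori bound $u \in L^\infty(\O)$. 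Fréchet differentiability of $(\sigma,\mu) \mapsto u(\sigma,\mu)$ will follow once I verify that $F$ is continuously Fréchet differentiable at the solution triple and that the partial derivative $F_u$ is a bijection between $H^1_0(\O)$ and $H^{-1}(\O)$.

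For the first point, the candidate partial derivatives of $F$ are $F_\sigma \delta\sigma = u\,\delta\sigma$, $F_\mu \delta\mu = u^2\,\delta\mu$, and $F_u \delta u = -\nabla\cdot(D\nabla \delta u) + \sigma\delta u + 2\mu u\,\delta u$. The only nontrivial check is Fréchet (as opposed to Gâteaux) differentiability of the quadratic term $\mu u^2$, whose increment-based remainder equals $\mu(\delta u)^2$. The 2D Sobolev embedding $H^1(\O) \hookrightarrow L^4(\O)$ gives $\|\mu(\delta u)^2\|_{L^2} \leq \|\mu\|_{L^\infty}\|\delta u\|_{L^4}^2 \leq C\|\delta u\|_{H^1}^2$, which is $o(\|\delta u\|_{H^1})$ as $\delta u \to 0$, so this term is indeed Fréchet differentiable into $L^2 \hookrightarrow H^{-1}$. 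Continuity of the three partial derivatives in $(\sigma,\mu,u)$ then follows from the same embedding and the $L^\infty$ bound on $u$.

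For the second point, $F_u$ at $(\sigma,\mu,u)$ is the linear elliptic operator associated with the bilinear form $a(v, w) = \int_\O \bigl(D\nabla v \cdot \nabla w + (\sigma + 2\mu u) vw\bigr)\,dx$ on $H^1_0(\O)\times H^1_0(\O)$. Since $D$ is uniformly positive and $\sigma + 2\mu u \geq a_\sigma > 0$ (using $u \geq 0$ from Theorem \ref{th:existence of solution}), the form is bounded and coercive by Poincaré's inequality, so Lax–Milgram yields a bounded inverse $F_u^{-1}: H^{-1}(\O) \to H^1_0(\O)$. The implicit function theorem then gives the Fréchet derivative $u'(\sigma,\mu)(\delta\sigma,\delta\mu) = \delta u$ characterized as the unique $H^1_0(\O)$ solution of the linearized equation $-\nabla\cdot(D\nabla \delta u) + (\sigma + 2\mu u)\,\delta u = -u\,\delta\sigma - u^2\,\delta\mu$ in $\O$ with $\delta u = 0$ on $\partial\O$.

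The main obstacle is the nonlinear remainder estimate for $\mu u^2$; it is genuinely dimension-dependent, and what makes it go through cleanly is the pair of facts special to $\O \subset \Rb^2$: $H^1 \hookrightarrow L^p$ for every $p < \infty$ and $H^2 \hookrightarrow L^\infty$. In higher dimensions one would need to enlarge the ambient space for $u$ (for instance to $H^1_g \cap L^4$, as already appears in Theorem \ref{th:existence of solution}) and pair the quadratic remainder more delicately, but in the 2D setting of this paper the implicit function theorem applies directly and yields the claimed differentiability.
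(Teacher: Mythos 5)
Your argument is correct. Note, however, that the paper does not prove this lemma at all: it simply cites \cite[Proposition 2.5]{Ren2018}, so what you have written is a self-contained replacement for an external reference rather than an alternative to an in-paper proof. Your route --- the implicit function theorem for $F(\sigma,\mu,u)=-\nabla\cdot(D\nabla u)+\sigma u+\mu u^2$ --- is the standard one, and the two-dimensional embeddings $H^1(\Omega)\hookrightarrow L^4(\Omega)$ and $H^2(\Omega)\hookrightarrow C^0(\bar\Omega)$ are exactly what make $F$ a $C^1$ (indeed polynomial, hence smooth) map into $H^{-1}(\Omega)$ and make the zeroth-order coefficient $\sigma+2\mu u$ of $F_u$ bounded; the coercivity of $F_u$ then follows from $\sigma\ge a_\sigma>0$, $\mu\ge a_\mu>0$ and $u\ge 0$. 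Two small points worth tightening: the non-negativity of $u$ is not literally part of the statement of Theorem \ref{th:existence of solution} but is imported from \cite{Ren2018} (the paper invokes it when replacing $|u|u$ by $u^2$), so you should cite it as such; and since the implicit function theorem only produces a \emph{local} solution branch, you should say explicitly that this branch coincides with the global solution map by the uniqueness assertion of Theorem \ref{th:existence of solution}, which is what lets you conclude differentiability of $(\sigma,\mu)\mapsto u(\sigma,\mu)$ itself. With those two remarks added, your proof is complete and arguably more informative than the paper's citation, since it also exhibits the linearized equation that underlies the adjoint computations in \eqref{L2_gradients}.
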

 Using Lemma \ref{differentiable_constraint}, we introduce the reduced cost functional
\begin{equation}\label{reduced_func}
\widehat{J}(\sigma,\mu) = J(\sigma,\mu, u_1(\sigma,\mu), u_2(\sigma,\mu)),
\end{equation}
where $u_i(\sigma,\mu)$, $i=1,2$ denotes the unique solution of \eqref{eq:eq1} given $\sigma,\mu$ and $g_i,i=1,2$. The constrained optimization problem (\ref{eq:minimization problem}) can be formulated as an unconstrained one as follows
\begin{equation}\label{reduced_min}
\min_{(\sigma,\mu)\in L^\sigma_{ad}\times L^\mu_{ad}} \hat{J}(\sigma,\mu).
\end{equation}We next investigate the existence of a minimizer to the 2PPAT-SR problem (\ref{eq:minimization problem}).  
\begin{proposition}
Let $g_1,g_2 \in H^{1/2}(\Omega)$. Then there exists a quadruplet $(\sigma^*,\mu^*,u_1^*,u_2^*) \in L^\sigma_{ad}\times L^\mu_{ad} \times H^1_{g_1}(\Omega)\times H^1_{g_2}(\Omega)$ such that $u_i^*, i=1,2$ are solutions to $\mathcal{L}(\sigma,\mu,u_i,g_i)=0, i=1,2$ and $(\sigma^*,\mu^*)$ minimizes $\hat{J}$ in $L^\sigma_{ad}\times L^\mu_{ad} $.
\end{proposition}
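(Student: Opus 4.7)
The plan is to apply the direct method of the calculus of variations to the reduced functional $\hat J$. Since every term of $\hat J$ is non-negative, $m := \inf_{(\sigma,\mu)\in L^\sigma_{ad}\times L^\mu_{ad}} \hat J(\sigma,\mu) \in [0,\infty)$. Pick a minimizing sequence $(\sigma_n,\mu_n)\subset L^\sigma_{ad}\times L^\mu_{ad}$. The box constraints in the admissible sets yield uniform $L^\infty$ bounds, and the $H^1$ regularization terms in $\hat J(\sigma_n,\mu_n)\le m+1$ control $\|\nabla\sigma_n\|_{L^2}$ and $\|\nabla\mu_n\|_{L^2}$. Hence $(\sigma_n,\mu_n)$ is bounded in $H^1(\Omega)$. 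Extracting a subsequence (not relabelled), I obtain $\sigma_n\rightharpoonup\sigma^*$ and $\mu_n\rightharpoonup\mu^*$ in $H^1(\Omega)$, and by the Rellich--Kondrachov theorem strongly in $L^p(\Omega)$ for every $p<\infty$, using the two-dimensional setting. The pointwise a.e.\ inequalities defining $L^\sigma_{ad}$ and $L^\mu_{ad}$ survive the strong $L^2$ limit, so $(\sigma^*,\mu^*)\in L^\sigma_{ad}\times L^\mu_{ad}$.

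Next I must show that $u_i^n := u_i(\sigma_n,\mu_n)$ converges, along a further subsequence, to the state $u_i^*$ associated with $(\sigma^*,\mu^*)$. By Theorem \ref{th:existence of solution}, each $u_i^n \in H^2(\Omega) \cap H^1_{g_i}(\Omega)$ is the unique solution of $\mathcal{L}(u_i^n,\sigma_n,\mu_n,g_i)=0$. Revisiting that proof with a fixed $H^2$-lift $\varphi_i$ of $g_i$, the energy estimate on $v_i^n = u_i^n-\varphi_i$ and the bound $\|A[v]\|_{H^2}\le C(\|v\|_{L^4}+\|f\|_{L^2})$ show that the constants depend only on $\|D\|_{W^{1,\infty}}$, the admissible bounds $a_\sigma,b_\sigma,a_\mu,b_\mu$, and $\|\varphi_i\|_{H^2}$. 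This furnishes a uniform-in-$n$ bound of $u_i^n$ in $H^2(\Omega)$, and Rellich--Kondrachov extracts a subsequence with $u_i^n\to u_i^*$ strongly in $H^1(\Omega)$ and in $L^p(\Omega)$ for every $p<\infty$, with $u_i^*\in H^2(\Omega)\cap H^1_{g_i}(\Omega)$.

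I then pass to the limit in the weak formulation
\[
\int_\Omega \bigl(D\,\nabla u_i^n\cdot\nabla\chi + \sigma_n u_i^n \chi + \mu_n (u_i^n)^2\chi\bigr)\,dx = 0, \qquad \forall\,\chi\in H^1_0(\Omega).
\]
The diffusion and linear reaction terms pass through by strong $H^1$ convergence of $u_i^n$ together with strong $L^2$ convergence of $\sigma_n$. The nonlinearity is the delicate piece: since $\mu_n\to\mu^*$ in $L^4(\Omega)$ and $(u_i^n)^2\to (u_i^*)^2$ in $L^2(\Omega)$ (because $u_i^n\to u_i^*$ in $L^4(\Omega)$), I get $\mu_n(u_i^n)^2\to \mu^*(u_i^*)^2$ in $L^{4/3}(\Omega)$, which is enough to test against $\chi\in H^1_0(\Omega)\subset L^p(\Omega)$ for all $p<\infty$. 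By the uniqueness part of Theorem \ref{th:existence of solution}, the limit satisfies $u_i^*=u_i(\sigma^*,\mu^*)$. The same convergences imply $\mathcal{H}_j^{\sigma_n,\mu_n}\to\mathcal{H}_j^{\sigma^*,\mu^*}$ in $L^2(\Omega)$, so the data fidelity term converges.

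To close, the $H^1$ regularization terms are weakly lower semicontinuous in $H^1(\Omega)$, the $L^1$ regularization terms converge by strong $L^1(\Omega)$ convergence (a consequence of strong $L^2$ convergence on a bounded domain, via Cauchy--Schwarz), and the data fidelity converges as above. Combining,
\[
\hat J(\sigma^*,\mu^*)\le\liminf_{n\to\infty}\hat J(\sigma_n,\mu_n)=m,
\]
so $(\sigma^*,\mu^*)$ minimizes $\hat J$ and $(\sigma^*,\mu^*,u_1^*,u_2^*)$ is the required quadruplet. The main obstacle is the control of the nonlinear reaction term $\mu(u_i)^2$ when passing to the limit in the PDE constraint; the uniform $H^2$-bound on the state sequence, inherited from the fixed-point argument in Theorem \ref{th:existence of solution}, is what makes this step tractable.
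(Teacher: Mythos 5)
Your proof is correct and follows essentially the same route as the paper: the direct method with a minimizing sequence, weak $H^1$ compactness of the coefficients, compactness of the states in $H^1$ and $L^4$ coming from an $H^2$ bound, passage to the limit in the weak formulation of the semi-linear PDE, and lower semicontinuity of the functional. If anything, you are more explicit than the paper about where the uniform $H^2$ bound on $u_i^n$ comes from (the paper simply asserts weak $H^2$ convergence of the states) and about using the strong $L^p$ convergence of $(\sigma_n,\mu_n)$ to treat the nonlinearity and the $L^1$ terms.
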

\begin{proof}
We observe that $\hat{J}$ is bounded below. This implies there exists a minimizing sequence $(\sigma_m,\mu_m) \in L^\sigma_{ad}\times L^\mu_{ad}$. Since $\hat{J}$ is coercive in $L^\sigma_{ad}\times L^\mu_{ad}$, we have that the sequence $(\sigma_m, \mu_m)$ is bounded. Since $L^\sigma_{ad}\times L^\mu_{ad}$ is a closed subspace of a Hilbert space, it is reflexive. Thus, the sequence $(\sigma_m,\mu_m)$ has a weakly convergent subsequence $(\sigma_{m_l}, \mu_{m_l}) \rightharpoonup (\sigma^*,\mu^*)$. Consequently, the sequences $u_i(\sigma_{m_l},\mu_{m_l}) \rightharpoonup u^*$  in $ H^2(\Omega) \subset H^1_{g_i}(\Omega),~ i=1,2$. Due to the fact that $ H^2(\Omega)$ is compactly embedded in $H^1_{g_i}(\Omega)$, we have $u_i(\sigma_{m_l}, \mu_{m_l}) \rightarrow u^* \in H^1_{g_i}(\Omega)$. Again, since $ H^2(\Omega)$ is compactly embedded in $L^4(\Omega)$, we additionally have $u_i(\sigma_{m_l}, \mu_{m_l}) \rightarrow u^* \in L^4(\Omega)$. We next aim at showing that $u^* = u(\sigma^*,\mu^*) \in H^1_{g_i}(\Omega)$. For this purpose, we consider the weak formulation of the solution of \eqref{eq: semi-linear equation}. The first term in the weak formulation we need to consider is $\langle \sigma_{m_l}u_i(\sigma_{m_l},\mu_{m_l}), \psi \rangle_{L^2(\Omega)}$. By the preceding discussion, we have  $\langle \sigma_{m_l}u_i(\sigma_{m_l}, \mu_{m_l}), \psi \rangle_{L^2(\Omega)}\rightarrow \langle \sigma^*u_i^*, \psi \rangle_{L^2(\Omega)}$. The second term we need to analyze is $\langle \mu_{m_l}u_i^2(\sigma_{m_l},\mu_{m_l}), \psi \rangle_{L^2(\Omega)}$. Since, $\mu_{m_l} \rightharpoonup \mu^*$ in $L^2(\Omega)$ and $u_i(\sigma_{m_l}, \mu_{m_l}) \rightarrow u^* \in L^4(\Omega)$, we have 
$\langle \mu_{m_l}u_i^2(\sigma_{m_l},\mu_{m_l}), \psi \rangle_{L^2(\Omega)} \rightarrow \langle \mu^*(u_i^*)^2, \psi \rangle_{L^2(\Omega)}$.

Thus, $(\sigma^*,\mu^*,u_i^*)$ solves \eqref{eq: semi-linear equation} with boundary condition $g_i$ and by continuity of the map $u(\sigma,\mu)$, we have $u^* = u(\sigma^*,\mu^*)$. Since $\hat{J}$ is sequentially weakly lower semi-continuous, we have that $(\sigma^*,\mu^*,u_1^*,u_2^*) $ minimizes $\hat{J}$ in $L^\sigma_{ad}\times L^\mu_{ad} \times H^1_{g_1}(\Omega)\times H^1_{g_2}(\Omega)$.
\end{proof}

\subsection{Characterization of local minima}
To characterize the solution of our optimization problem through first-order optimality conditions, we write the reduced functional $\hat{J}$ as follows
$$
\hat{J} = \hat{J}_1+\hat{J}_2,~ \hat{J}_i:L_{ad}^\sigma\times L_{ad}^\mu \rightarrow\mathbb{R}^+,~ i=1,2,
$$
 where
\begin{equation}\label{sumoffunctionals}
\begin{aligned}
&\hat{J}_1(\sigma,\mu) = \sum_{j=1}^2\frac{\alpha_j}{2}\|\mathcal{H}_j^{ \sigma, \mu} - G_j^{\delta}\|^2+\frac{\xi_1}{2}\|\sigma-\sigma_b\|_{H^1(\Omega)}^2 +\frac{\xi_2}{2}\|\mu-\mu_b\|_{H^1(\Omega)}^2,\\
&\hat{J}_2(\sigma,\mu) =\gamma_1\|\sigma-\sigma_b\|_{L^1}  +\gamma_2\|\mu-\mu_b\|_{L^1}.
\end{aligned}
\end{equation}

\begin{remark}
The functional $\hat{J}_1$ is smooth and possibly non-convex, while $\hat{J}_2$ is non-smooth and convex.
\end{remark}
\noindent The following property can be proved using arguments in \cite{Kuchment_Steinhauer}.
\begin{proposition}\label{diff_J1}
The reduced functional $\hat{J}_1(\sigma,\mu)$ is weakly lower semi-continuous, bounded below and Fr\'{e}chet differentiable with respect to $\sigma,\mu$.
\end{proposition}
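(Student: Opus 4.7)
The plan is to verify the three claimed properties of $\hat{J}_1$ separately, leveraging Theorem \ref{th:existence of solution} for the regularity of the photon density and Lemma \ref{differentiable_constraint} for the differentiability of the solution map. Boundedness below is immediate since every summand in $\hat{J}_1$ is the square of a norm, so $\hat{J}_1 \geq 0$.

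For weak lower semi-continuity, I would split the functional into the two $H^1$ regularization terms and the data fidelity terms. The regularization pieces $\tfrac{\xi_i}{2}\|\cdot-\cdot_b\|_{H^1(\Omega)}^2$ are convex and continuous on $H^1(\Omega)$, hence weakly lower semi-continuous. For the fidelity terms, I would actually establish the stronger statement that $(\sigma,\mu)\mapsto \|\mathcal{H}_j^{\sigma,\mu}-G_j^\delta\|_{L^2}^2$ is weakly continuous on $L_{ad}^\sigma\times L_{ad}^\mu$. Concretely, given a weakly convergent sequence $(\sigma_{m_l},\mu_{m_l})\rightharpoonup(\sigma^*,\mu^*)$ in $H^1(\Omega)\times H^1(\Omega)$, the same reasoning as in the preceding existence proposition gives, via the bound $\|u_j\|_{H^2(\Omega)}\leq C$ from Theorem \ref{th:existence of solution} and the compact embeddings $H^2(\Omega)\hookrightarrow L^4(\Omega)$ and $H^1(\Omega)\hookrightarrow L^4(\Omega)$, strong convergence $u_j(\sigma_{m_l},\mu_{m_l})\to u_j(\sigma^*,\mu^*)$ in $L^4(\Omega)$ and $\sigma_{m_l}\to\sigma^*$, $\mu_{m_l}\to\mu^*$ in $L^4(\Omega)$. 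Using $\Gamma\in L^\infty(\Omega)$ and H\"older's inequality, the products $\Gamma\sigma_{m_l}u_j(\sigma_{m_l},\mu_{m_l})$ and $\Gamma\mu_{m_l}u_j^2(\sigma_{m_l},\mu_{m_l})$ then converge to the corresponding limits in $L^2(\Omega)$, so $\mathcal{H}_j^{\sigma_{m_l},\mu_{m_l}}\to \mathcal{H}_j^{\sigma^*,\mu^*}$ in $L^2(\Omega)$ and the fidelity term converges.

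For Fr\'echet differentiability, the two regularization terms are smooth quadratic functionals on $H^1(\Omega)$, so their derivatives are standard. For the fidelity part, Lemma \ref{differentiable_constraint} supplies Fr\'echet differentiability of the control-to-state map $(\sigma,\mu)\mapsto u_j(\sigma,\mu)$ from $L_{ad}^\sigma\times L_{ad}^\mu$ into $H^1_{g_j}(\Omega)$. The map
\[
(\sigma,\mu,u)\longmapsto \Gamma[\sigma u+\mu u^2]
\]
is a polynomial in its arguments and, using the embedding $H^1(\Omega)\hookrightarrow L^4(\Omega)$ with $\Gamma\in L^\infty(\Omega)$, sends $H^1(\Omega)^3$ boundedly (indeed smoothly) into $L^2(\Omega)$. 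Composing with the squared $L^2$ norm and applying the chain rule yields Fr\'echet differentiability of the data fidelity on the product of admissible sets, with derivative computable via an adjoint argument in the usual way.

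The main technical obstacle is the quadratic nonlinearity $\mu u^2$ in $\mathcal{H}_j^{\sigma,\mu}$: under only weak $H^1$ convergence of $(\sigma,\mu)$, bilinear and cubic products are not automatically convergent in $L^2$. This is precisely what the improved regularity $u\in H^2(\Omega)$ from Theorem \ref{th:existence of solution} buys us, since it promotes the weak convergence of $u_j$ in $H^2$ to strong convergence in $L^4$ and thereby allows passage to the limit in all the nonlinear terms. The remainder of the argument is routine once this regularity is in hand.
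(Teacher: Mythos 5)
Your argument is correct, but it is worth noting that the paper does not actually supply a proof of this proposition: it simply asserts that the property "can be proved using arguments in \cite{Kuchment_Steinhauer}" and moves on. Your proposal is therefore a genuinely different (and more informative) route: a self-contained verification that reuses the machinery the paper itself builds, namely the $H^2$ regularity and uniform a priori bound from Theorem \ref{th:existence of solution}, the limit-passing argument of the existence proposition (compact embeddings $H^2(\Omega)\hookrightarrow L^4(\Omega)$ and $H^1(\Omega)\hookrightarrow L^4(\Omega)$ in two dimensions to upgrade weak convergence of $(\sigma,\mu,u)$ to strong $L^4$ convergence of all factors in the nonlinear terms), and Lemma \ref{differentiable_constraint} plus the chain rule for differentiability. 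Your decomposition into convex-continuous regularization terms (wlsc for free) and a weakly \emph{continuous} fidelity term is clean, and you correctly identify the cubic term $\mu u^2$ as the only real obstruction and why the $H^2$ bound resolves it. Two small points you should make explicit if you write this up: the $H^2$ bound on $u_j(\sigma_{m_l},\mu_{m_l})$ must be uniform along the sequence, which follows because the constant in the elliptic estimate depends only on the box constraints $a_\sigma,b_\sigma,a_\mu,b_\mu$, on $\|D\|_{W^{1,\infty}}$ and on $\Omega$; and Fr\'echet differentiability is being asserted on the closed convex set $L^\sigma_{ad}\times L^\mu_{ad}$ rather than on an open set, so it should be understood in the sense of a differentiable extension or of directional derivatives along admissible directions (the paper glosses over this as well). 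What your approach buys is transparency and verifiability within the paper's own framework; what the paper's citation buys is brevity, at the cost of leaving the reader to check that the hypotheses of the cited arguments are actually met for this semilinear model.
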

\noindent Next, we are going to define the subdifferential of a non-smooth functional.
\begin{definition}[Subdifferential]
If $\hat{J}$ is finite at a point $(\sigma,\mu)$, the Fréchet subdifferential of $\hat{J}$ at $(\sigma,\mu)$ is defined as follows \cite{ekeland}
\begin{equation}\label{subdifferential}
\partial \hat{J}(\bar{\sigma},\bar{\mu}):=\Bigg\lbrace{\phi\in \left(L^{\sigma}_{ad} \times L_{ad}^{\mu}\right)^*:\liminf_{(\sigma,\mu)\rightarrow (\bar{\sigma},\bar{\mu})}\dfrac{\hat{J}(\sigma,\mu)-\hat{J}(\bar{\sigma},\bar{\mu})-\langle\phi,(\sigma,\mu)-(\bar{\sigma},\bar{\mu})\rangle}{\|(\bar{\sigma},\bar{\mu})-(\sigma,\mu)\|_2}}\geq 0\Bigg\rbrace,
\end{equation}
where $\left(L^{\sigma}_{ad} \times L_{ad}^{\mu}\right)^*$ is the dual space of $L_{ad}^\sigma\times L_{ad}^\mu$. An element $\phi\in \partial \hat{J}(\sigma,\mu)$ is called a subdifferential of $\hat{J}$ at $(\sigma,\mu)$.
\end{definition}
\noindent In our setting, we have the following
\[
\partial\hat{J}(\sigma,\mu) = \nabla_{(\sigma,\mu)} \hat{J}_1(\sigma,\mu)+\partial \hat{J}_2(\sigma,\mu),
\]
since $\hat{J}_1$ is Fr\'{e}chet differentiable by Prop. \ref{diff_J1}. Moreover, for each $\alpha >0$, it holds that 
\[
\partial(\alpha \hat{J}) = \alpha \partial \hat{J}.
\]
The following proposition gives a necessary condition for a local minimum of $\hat{J}$ (see \cite{Roy_AET2}).
\begin{proposition}[Necessary condition]
If $\hat{J}=\hat{J}_1+\hat{J}_2$, with $\hat{J}_1, \hat{J}_2$ given by (\ref{sumoffunctionals}), attains a local minimum at $(\sigma^*,\mu^*)\in L_{ad}^\sigma\times L_{ad}^\mu$, then
\[
\textbf{0}\in \partial \hat{J}(\sigma^*,\mu^*),
\]
or equivalently
\[
-\nabla_{(\sigma,\mu)} \hat{J}_1(\sigma^*,\mu^*)\in \partial \hat{J}_2(\sigma^*,\mu^*).
\]
\end{proposition}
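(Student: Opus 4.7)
The plan is to deduce $\mathbf{0}\in\partial\hat J(\sigma^*,\mu^*)$ directly from the definition of a local minimum together with the definition \eqref{subdifferential} of the Fr\'{e}chet subdifferential, and then to pass to the equivalent inclusion using the additive decomposition
\[
\partial\hat J(\sigma,\mu)=\nabla_{(\sigma,\mu)}\hat J_1(\sigma,\mu)+\partial\hat J_2(\sigma,\mu)
\]
stated in the excerpt. First, since $(\sigma^*,\mu^*)$ is a local minimum of $\hat J$ on $L_{ad}^\sigma\times L_{ad}^\mu$, there is a neighborhood $U$ of $(\sigma^*,\mu^*)$ on which $\hat J(\sigma,\mu)-\hat J(\sigma^*,\mu^*)\ge 0$. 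Testing the candidate $\phi=\mathbf 0$ in \eqref{subdifferential}, the numerator reduces to $\hat J(\sigma,\mu)-\hat J(\sigma^*,\mu^*)$, which is non-negative on $U$, while the denominator $\|(\sigma,\mu)-(\sigma^*,\mu^*)\|_2$ is positive off the reference point. Hence the difference quotient is non-negative throughout a punctured neighborhood of $(\sigma^*,\mu^*)$, so its liminf as $(\sigma,\mu)\to(\sigma^*,\mu^*)$ is $\ge 0$. By the very definition of $\partial\hat J$, this gives $\mathbf 0\in\partial\hat J(\sigma^*,\mu^*)$.

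For the equivalent formulation, I would invoke the sum-rule decomposition recalled in the paragraph just above the proposition. Because $\hat J_1$ is Fr\'{e}chet differentiable at $(\sigma^*,\mu^*)$ by Proposition \ref{diff_J1}, a standard first-order Taylor argument justifies the exact additivity at that point: expanding $\hat J_1(\sigma,\mu)=\hat J_1(\sigma^*,\mu^*)+\langle\nabla_{(\sigma,\mu)}\hat J_1(\sigma^*,\mu^*),(\sigma,\mu)-(\sigma^*,\mu^*)\rangle+o(\|(\sigma,\mu)-(\sigma^*,\mu^*)\|_2)$ inside the difference quotient in \eqref{subdifferential} shows that $\phi\in\partial\hat J(\sigma^*,\mu^*)$ if and only if $\phi-\nabla_{(\sigma,\mu)}\hat J_1(\sigma^*,\mu^*)\in\partial\hat J_2(\sigma^*,\mu^*)$, the remainder term contributing zero to the liminf. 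Specializing to $\phi=\mathbf 0$ and rearranging gives $-\nabla_{(\sigma,\mu)}\hat J_1(\sigma^*,\mu^*)\in\partial\hat J_2(\sigma^*,\mu^*)$.

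The one point that must be handled with care is that the Fr\'{e}chet subdifferential is not additive in general, so the equality $\partial(\hat J_1+\hat J_2)=\nabla\hat J_1+\partial\hat J_2$ genuinely relies on Fr\'{e}chet differentiability of $\hat J_1$ at the point of interest; this is precisely what Proposition \ref{diff_J1} supplies. The box constraints $a_\sigma\le\sigma\le b_\sigma$ and $a_\mu\le\mu\le b_\mu$ that cut out $L_{ad}^\sigma\times L_{ad}^\mu$ do not introduce additional multipliers at this stage, since the dual pairing in \eqref{subdifferential} is already taken over $(L_{ad}^\sigma\times L_{ad}^\mu)^*$; any constraint qualifications are implicitly encoded in this choice of admissible class.
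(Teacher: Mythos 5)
Your argument is correct: the paper itself gives no proof of this proposition and simply defers to \cite{Roy_AET2}, and what you have written is the standard argument that reference relies on --- the generalized Fermat rule obtained by testing $\phi=\mathbf{0}$ in the definition \eqref{subdifferential}, followed by the exact sum rule $\partial(\hat J_1+\hat J_2)=\nabla\hat J_1+\partial\hat J_2$, which holds precisely because $\hat J_1$ is Fr\'{e}chet differentiable at the point (Proposition \ref{diff_J1}) so the $o(\|(\sigma,\mu)-(\sigma^*,\mu^*)\|_2)$ remainder contributes nothing to the liminf. Your closing caveat that the subdifferential sum rule fails without this differentiability, and that the box constraints enter only later through the variational inequality and multipliers, is exactly the right point to flag; no gaps.
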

The following variational inequality holds for each $\lambda\in \partial \hat{J}_2(\sigma^*,\mu^*)$ (see \cite{stadler}).
\begin{equation}\label{var_ineq}
\langle\nabla \hat{J}_1(\sigma^*,\mu^*)+\lambda,(\sigma,\mu)-(\sigma^*,\mu^*)\rangle \geq 0,\qquad \forall (\sigma,\mu) \in L_{ad}^\sigma\times L_{ad}^\mu.
\end{equation}
Using the definition of $\hat{J}_2$ in (\ref{sumoffunctionals}) and the fact that $L_{ad}^\sigma\times L_{ad}^\mu$ is reflexive, the inclusion $\lambda\in\partial \hat{J}_2(\sigma^*,\mu^*)$ gives the following characterization of space of $\lambda$
\[
\lambda = (\lambda_1,\lambda_2), \lambda_i \in \Lambda^i_{ad}:=\lbrace \lambda_i \in L^2(\Omega): 0\leq\lambda\leq \gamma_i,\mbox{ a.e. in } \Omega\rbrace, ~ i=1,2.
\]

A pointwise analysis of the variational inequality (\ref{var_ineq}) leads to the existence of a non-negative functions $\lambda_{i,a}^*,\lambda_{i,b}^*\in L^2(\Omega),~ i = 1,2$ that correspond to Lagrange multipliers for the inequality constraints in $L_{ad}^\sigma\times L_{ad}^\mu$. We, thus, have the following first-order optimality system.

\begin{proposition}[First-order necessary conditions]\label{necessary}
The optimal solution of the minimization problem (\ref{reduced_min}) can be characterized by the existence of $(\lambda_1^*,\lambda_2^*,\lambda_{1,a}^*,\lambda_{2,a}^*,\lambda_{1,b}^*,\lambda_{2,b}^*)\in (\Lambda_{ad})^2\times (L^2(\Omega))^4$ such that 
\begin{eqnarray}
&&\label{grad1_sigma}\nabla_\sigma \hat{J}_1(\sigma^*,\mu^*) + \lambda_1^*+\lambda_{1,b}^*-\lambda_{1,a}^*=0,\\
&&\label{grad1_mu}\nabla_\mu \hat{J}_1(\sigma^*,\mu^*) + \lambda_2^*+\lambda_{2,b}^*-\lambda_{2,a}^*=0,\\
&&\label{comp_st_sigma} \lambda_{1,b}^* \geq 0,~ b-\sigma^*\geq 0,~\langle \lambda_{1,b}^*,b-\sigma^* \rangle=0,\\ 
&&\lambda_{1,a}^* \geq 0,~ \sigma^*-a\geq 0,~\langle \lambda_{1,a},\sigma^*-a \rangle=0,\\ 
&&\label{comp_st_mu} \lambda_{2,b}^* \geq 0,~ b-\mu^*\geq 0,~\langle \lambda_{2,b}^*,b-\mu^* \rangle=0,\\ 
&&\lambda_{2,a}^* \geq 0,~ \mu^*-a\geq 0,~\langle \lambda_{2,a},\mu^*-a \rangle=0,\\ 
&&\label{4}\lambda_1^*=\gamma _1\mbox{ a.e. on } \lbrace x\in\Omega:\sigma^*(x) > 0 \rbrace,\\
&&\label{5}\lambda_2^*=\gamma _2\mbox{ a.e. on } \lbrace x\in\Omega:\mu^*(x) > 0 \rbrace,\\
&& \label{comp_end_sigma} 0\leq\lambda_1^*\leq \gamma_1 \mbox{ a.e. on } \lbrace x\in\Omega:\sigma^*(x) = 0 \rbrace,\\
&& \label{comp_end_mu} 0\leq\lambda_2^*\leq \gamma_2 \mbox{ a.e. on } \lbrace x\in\Omega:\mu^*(x) = 0 \rbrace.
\end{eqnarray}
The conditions (\ref{comp_st_sigma})-(\ref{comp_end_mu}) are known as the complementarity conditions for $(\sigma^*,\mu^*,\lambda_1^*,\lambda_2^*)$.
\end{proposition}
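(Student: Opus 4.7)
The plan is to derive the optimality system from the necessary condition $\mathbf{0} \in \partial \hat{J}(\sigma^*, \mu^*)$ combined with a pointwise analysis of the variational inequality (\ref{var_ineq}). Since $\hat{J}_1$ is Fr\'{e}chet differentiable (Proposition \ref{diff_J1}) and $\hat{J}_2$ is convex and continuous, the subdifferential sum rule gives $\partial \hat{J} = \nabla \hat{J}_1 + \partial \hat{J}_2$, so there exists $\lambda^* = (\lambda_1^*, \lambda_2^*) \in \partial \hat{J}_2(\sigma^*, \mu^*)$ for which (\ref{var_ineq}) holds on all of $L_{ad}^\sigma \times L_{ad}^\mu$. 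Everything below is extracted from this one inequality together with the structure of $\hat{J}_2$ and of the admissible set.

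First I would treat the subdifferential part. Since $\hat{J}_2$ is separable in $\sigma$ and $\mu$, the inclusion $\lambda^* \in \partial\hat{J}_2(\sigma^*, \mu^*)$ decouples into $\lambda_i^* \in \partial(\gamma_i \| \cdot - c_b\|_{L^1})$ for the appropriate variable. The pointwise characterization of $\partial \| \cdot \|_{L^1}$ then produces the bound $0 \le \lambda_i^* \le \gamma_i$, yields the identities (\ref{4})--(\ref{5}) on the positive sets $\{\sigma^* > 0\}$, $\{\mu^* > 0\}$, and gives the range restrictions (\ref{comp_end_sigma})--(\ref{comp_end_mu}) on the corresponding zero sets.

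Second I would handle the box constraints defining $L_{ad}^\sigma \times L_{ad}^\mu$ using KKT theory. I introduce non-negative Lagrange multipliers $\lambda_{1,a}^*, \lambda_{1,b}^* \in L^2(\Omega)$ for $a \le \sigma \le b$ and $\lambda_{2,a}^*, \lambda_{2,b}^* \in L^2(\Omega)$ for $a \le \mu \le b$. Plugging admissible pointwise perturbations $(\sigma^* + t\,\eta, \mu^*)$ and $(\sigma^*, \mu^* + t\,\eta)$ with $t > 0$ small into (\ref{var_ineq}) forces the pointwise equalities (\ref{grad1_sigma})--(\ref{grad1_mu}), and the complementarity relations (\ref{comp_st_sigma})--(\ref{comp_st_mu}) then follow from the standard characterization of the normal cone to a box constraint in $L^2(\Omega)$, together with the fact that, on the interior of the box, the multipliers must vanish so that the variational equality is already satisfied.

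The main obstacle is to verify that $\nabla_{(\sigma, \mu)} \hat{J}_1$ is an $L^2(\Omega)$-function (not merely an element of $H^1(\Omega)^*$), which is what permits the Lagrange multipliers to be taken in $L^2(\Omega)$. This requires computing $\nabla \hat{J}_1$ explicitly via an adjoint state associated with the linearization of $\mathcal{L}(u_i, \sigma, \mu, g_i) = 0$ (Lemma \ref{differentiable_constraint} provides the needed differentiability) and invoking the elliptic regularity from Theorem \ref{th:existence of solution} applied to the adjoint equation. Once this $L^2$-representation is in hand, combining it with the separated subdifferential analysis of $\hat{J}_2$ and the pointwise KKT analysis of the box constraints assembles the full system (\ref{grad1_sigma})--(\ref{comp_end_mu}).
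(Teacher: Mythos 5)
Your proposal is correct and follows essentially the same route the paper takes: the necessary condition $\mathbf{0}\in\partial\hat{J}(\sigma^*,\mu^*)$ with the sum rule $\partial\hat{J}=\nabla\hat{J}_1+\partial\hat{J}_2$, the variational inequality (\ref{var_ineq}) analyzed pointwise following \cite{stadler}, the characterization $0\le\lambda_i^*\le\gamma_i$ of the $L^1$-subdifferential, box-constraint multipliers $\lambda_{i,a}^*,\lambda_{i,b}^*$, and the adjoint-based $L^2$ representation of $\nabla\hat{J}_1$ that the paper records in (\ref{L2_gradients})--(\ref{adj_2}). The only difference is that you spell out slightly more detail than the paper, which relegates most of these steps to the discussion preceding the proposition and to the citation of Stadler.
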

\noindent To determine the gradient $\nabla_\sigma \hat{J}_1,\nabla_\mu \hat{J}_1$, we use the adjoint approach (see for e.g., \cite{ RoyAnnunziatoBorzi2016,Roy2018}). This gives the following reduced gradients of $\hat{J}_1$
\begin{equation}\label{L2_gradients}
\begin{aligned}
\nabla_\sigma \hat{J}_1(\sigma^*,\mu^*)=&\alpha_1(\mathcal{H}_1^{ \sigma^*, \mu^*}-G_1^\delta)\Gamma u_1+\alpha_2(\mathcal{H}_2^{ \sigma^*, \mu^*}-G_2^\delta)\Gamma u_2 + u_1v_1+ u_2v_2 + \xi_1 \sigma^*\\
\nabla_\mu \hat{J}_1(\sigma^*,\mu^*)=&\alpha_1(\mathcal{H}_1^{ \sigma^*, \mu^*}-G_1^\delta)\Gamma u_1^2+\alpha_2(\mathcal{H}_2^{ \sigma^*, \mu^*}-G_2^\delta)\Gamma u_2^2 + u_1^2v_1+u_2^2v_2 + \xi_2 \mu^*\\
\end{aligned}
\end{equation}
where $u_1,u_2$ satisfy the forward equations $\mathcal{L}(u_1,\sigma^*,\mu^*,g_1)=0, ~\mathcal{L}(u_2,\sigma^*,\mu^*,g_2)=0$, respectively, and $v_1,v_2$ satisfy
the adjoint equations 
\begin{equation}\label{adj_1}
\begin{aligned}
-\nabla\cdot(D\nabla{v_1})+\sigma^* v_1 + 2\mu^*u_1 v_1&=-\alpha_1\Gamma (\sigma^* u_1+\mu^*u_1^2-G_1^\delta)\cdot (\sigma^* + 2u_1)~ \mbox{in }\Omega, \\
v_1&=0, \quad \mbox{ on } \partial \Omega
\end{aligned}
\end{equation}
\begin{equation}\label{adj_2}
\begin{aligned}
-\nabla\cdot(D\nabla{v_2})+\sigma^* v_2 + 2\mu^*u_2 v_2&=-\alpha_2\Gamma (\sigma^* u_2+\mu^*u_2^2-G_2^\delta)\cdot (\sigma^* + 2|u_2|)~ \mbox{in }\Omega, \\
v_2&=0, \quad \mbox{ on } \partial \Omega.
\end{aligned}
\end{equation}
The complementarity conditions (\ref{comp_st_sigma})-(\ref{comp_end_mu}) can be rewritten in a compact form as follows. Define 
\begin{equation}\label{eq:c}
\begin{aligned}
&c_1^* = \lambda_1^*+\lambda_{1,b}^*-\lambda_{1,a}^*,\\
&c_2^* = \lambda_2^*+\lambda_{2,b}^*-\lambda_{2,a}^*.
\end{aligned}
\end{equation}
Then the triplets $(\lambda_1^*,\lambda_{1,a}^*,\lambda_{1,b}^*),(\lambda_2^*,\lambda_{2,a}^*,\lambda_{2,b}^*)$ are obtained by solving the following equations
\begin{equation}\label{lambda_comp}
\begin{aligned}
&\lambda_i^* = \min(\gamma_i,\max(0,c_i^*)),\\
&\lambda_{i,a}^* = -\min(0,c_i^*+\gamma_i),\\
&\lambda_{i,b}^* = \max(0,c_i^*-\gamma_i),
\end{aligned}
\end{equation}
for $i=1,2$ (see \cite{stadler}).
For each $k\in\R^+$, define the following quantity
$$
\begin{aligned}
E_1(\sigma^*,c_1^*) = \sigma^*&-\max\lbrace 0,\sigma^*+k(c_1^*-\gamma_1)\rbrace+\max\lbrace 0,\sigma^*-b+k(c_1^*-\gamma_1)\rbrace\\
&-\min\lbrace 0,\sigma^*+k(c_1^*+\gamma_1)\rbrace+\min\lbrace 0,\sigma^*-a+k(c_1^*+\gamma_1)\rbrace.\\
E_2(\mu^*,c_2^*) = \mu^*&-\max\lbrace 0,\mu^*+k(c_2^*-\gamma_2)\rbrace+\max\lbrace 0,\mu^*-b+k(c_2^*-\gamma_2)\rbrace\\
&-\min\lbrace 0,\mu^*+k(c_2^*+\gamma_2)\rbrace+\min\lbrace 0,\mu^*-a+k(c_2^*+\gamma_2)\rbrace.\\
\end{aligned}
$$
The following lemma determines the complementarity conditions (\ref{comp_st_sigma})-(\ref{comp_end_mu}) in terms of $E_1,E_2$ (see \cite[Lemma 2.2]{stadler}).
\begin{lemma}\label{Comp_conditions}
The complementarity conditions (\ref{comp_st_sigma})-(\ref{comp_end_mu}) are equivalent to the following
\begin{equation}\label{comp_equality}
E_1(\sigma^*,c_1^*)=0 = E_2(\mu^*,c_2^*),
\end{equation}
where $c_i,~i=1,2$ are defined in (\ref{eq:c}). 
\end{lemma}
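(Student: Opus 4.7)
The plan is to reduce the equivalence to a pointwise statement, since every condition in (\ref{comp_st_sigma})--(\ref{comp_end_mu}) and every $\max/\min$ in the definition of $E_1, E_2$ acts pointwise. It thus suffices to verify at almost every $x \in \Omega$ that the scalar identity $E_1(\sigma^*, c_1^*) = 0$ is equivalent to the scalar version of the complementarity system for $(\sigma^*, \lambda_1^*, \lambda_{1,a}^*, \lambda_{1,b}^*)$ coupled with the defining relation $c_1^* = \lambda_1^* + \lambda_{1,b}^* - \lambda_{1,a}^*$ from (\ref{eq:c}) and the projection formulas (\ref{lambda_comp}); the statement for $E_2$ follows by the identical argument with $\sigma^*$ replaced by $\mu^*$.

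The key observation I would exploit is that the two $\max$-terms in $E_1$ together encode the projection of $\sigma^* + k(c_1^* - \gamma_1)$ onto $(-\infty, b]$, while the two $\min$-terms encode the projection of $\sigma^* + k(c_1^* + \gamma_1)$ onto $[a, +\infty)$. Put together, $E_1(\sigma^*, c_1^*) = 0$ is exactly the fixed-point identity $\sigma^* = \mathrm{prox}_{k \gamma_1 |\,\cdot\,| + k \iota_{[a,b]}}(\sigma^* - k c_1^*)$, with $\iota_{[a,b]}$ denoting the indicator of the admissible box. Since the proximal operator of $\gamma_1 |\,\cdot\,| + \iota_{[a,b]}$ admits the standard soft-thresholding-and-clipping formula, this fixed-point equation is equivalent to the full KKT system for minimizing $\gamma_1|\sigma| + \tfrac{1}{2k}(\sigma - \sigma^* + k c_1^*)^2$ subject to $a \leq \sigma \leq b$; its KKT multipliers $(\lambda_1^*, \lambda_{1,a}^*, \lambda_{1,b}^*)$ decode precisely into the sign and activity relations (\ref{comp_st_sigma})--(\ref{comp_st_mu}) together with the subdifferential inclusions (\ref{4})--(\ref{comp_end_mu}).

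The hard part is the bookkeeping of the case analysis, which I would carry out by enumerating the regimes of $\sigma^* \in [a, b]$ (the two boundary cases $\sigma^* \in \{a, b\}$ and the interior subcases distinguished by whether $\sigma^*$ is positive, zero, or negative when $a < 0 < b$) cross-indexed with the sign of $c_1^* \pm \gamma_1$. In each regime the arguments $\sigma^* - b + k(c_1^* - \gamma_1)$ and $\sigma^* - a + k(c_1^* + \gamma_1)$ have unambiguous sign, so the five terms in $E_1$ either vanish or cancel in matched pairs, and the encoded projection collapses to the complementarity relations. One must additionally check $k$-independence, which is immediate because every case boundary is homogeneous in $k > 0$. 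The whole argument transfers verbatim to $E_2$ by replacing the triple $(\sigma^*, c_1^*, \gamma_1)$ with $(\mu^*, c_2^*, \gamma_2)$, yielding (\ref{comp_equality}).
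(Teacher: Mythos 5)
Your approach is correct and is essentially the argument the paper relies on: the paper gives no proof of its own here but defers to Lemma~2.2 of Stadler (reference \cite{stadler}), whose proof is exactly the pointwise case analysis you outline, checking in each regime of $\sigma^*$ (interior, active bounds, sign cases) against the signs of $c_1^*\pm\gamma_1$ that the $\max/\min$ terms either vanish or cancel, with the multipliers recovered from (\ref{lambda_comp}). One small caveat on your motivating gloss: with the paper's sign convention $c_1^*=-\nabla_\sigma\hat J_1$ the relevant fixed point is $\sigma^*=\mathrm{prox}(\sigma^*+kc_1^*)$ (a descent step), and the identification with $\mathrm{prox}_{k\gamma_1|\cdot|+\iota_{[a,b]}}$ is exact only where the thresholding and clipping intervals do not interact (automatic on the admissible set, and always when $0\in[a,b]$); neither point affects the case analysis, which is where the actual proof lives.
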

\noindent Using the gradients in (\ref{L2_gradients}) and Lemma \ref{Comp_conditions}, the optimality conditions (\ref{adj_1})-(\ref{comp_end_mu}) for the 2PPAT-SR problem can be rewritten as follows 
\begin{proposition}\label{nec_opt}
A local minimizer $(u_1, u_2,\sigma^*,\mu^*)$ of the problem (\ref{eq:minimization problem}) can be characterized by the existence of $(v_1,v_2,c_1^*,c_2^*)\in H^1_0(\Omega)\times H^1_0(\Omega)\times L_{ad}^\sigma\times L_{ad}^\mu$, such that the following system is satisfied
\begin{equation}\label{eq:optimality}
\begin{aligned}
-\nabla\cdot(D\nabla{u_1})+\sigma^* u_1 + \mu^* u_1^2=0, \qquad   \mbox{in }\Omega,& \\
u_1=g_1, \quad \mbox{ on } \partial \Omega,&\\
-\nabla\cdot(D\nabla{v_1})+\sigma^* v_1 + 2\mu^*u_1 v_1=-\alpha_1\Gamma (\sigma^* u_1+\mu^*u_1^2-G_1^\delta)\cdot (\sigma^* + 2u_1)~ \mbox{in }\Omega,& \\
v_1=0, \quad \mbox{ on } \partial \Omega,&\\
-\nabla\cdot(D\nabla{u_2})+\sigma^* u_2 + \mu^* u_2^2=0, \qquad   \mbox{in }\Omega, \\
u_2=g_2, \quad \mbox{ on } \partial \Omega,&\\
-\nabla\cdot(D\nabla{v_2})+\sigma^* v_2 + 2\mu^*u_2 v_2=-\alpha_2\Gamma (\sigma^* u_2+\mu^*u_2^2-G_2^\delta)\cdot (\sigma^* + 2u_2)~ \mbox{in }\Omega,& \\
v_2=0, \quad \mbox{ on } \partial \Omega,&\\
\alpha_1(\mathcal{H}_1^{ \sigma^*, \mu^*}-G_1^\delta)\Gamma u_1+\alpha_2(\mathcal{H}_2^{ \sigma^*, \mu^*}-G_2^\delta)\Gamma u_2 + u_1v_1+ u_2v_2 + \xi_1 \sigma^*=0,&\\
\alpha_1(\mathcal{H}_1^{ \sigma^*, \mu^*}-G_1^\delta)\Gamma u_1^2+\alpha_2(\mathcal{H}_2^{ \sigma^*, \mu^*}-G_2^\delta)\Gamma u_2^2 + u_1^2v_1+ u_2^2v_2 + \xi_2 \mu^*=0,&\\
E_1(\sigma^*,c_1^*)=0 ,&\\
E_2(\mu^*,c_2^*)=0.\\
\end{aligned}
\end{equation}
\end{proposition}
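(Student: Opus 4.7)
The plan is to synthesize the statement from ingredients already assembled in this section: the abstract necessary condition $0\in\partial\hat{J}(\sigma^*,\mu^*)$, its explicit multiplier form in Proposition \ref{necessary}, the reduced gradient formulas (\ref{L2_gradients}), and the compact reformulation of the complementarity conditions given by Lemma \ref{Comp_conditions}. First, since $(\sigma^*,\mu^*)$ is a local minimizer of $\hat{J}=\hat{J}_1+\hat{J}_2$ and $\hat{J}_1$ is Fr\'echet differentiable by Proposition \ref{diff_J1}, the inclusion $-\nabla_{(\sigma,\mu)}\hat{J}_1(\sigma^*,\mu^*)\in\partial\hat{J}_2(\sigma^*,\mu^*)$ holds; a pointwise analysis of the subdifferential of the $L^1$ terms in $\hat{J}_2$, together with the box constraints encoded in $L^\sigma_{ad}\times L^\mu_{ad}$, then produces Proposition \ref{necessary} with multipliers $\lambda_i^*$, $\lambda_{i,a}^*$, $\lambda_{i,b}^*$, $i=1,2$, satisfying (\ref{grad1_sigma})--(\ref{comp_end_mu}).

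Second, I would replace the abstract gradients $\nabla_\sigma\hat{J}_1$ and $\nabla_\mu\hat{J}_1$ in (\ref{grad1_sigma})--(\ref{grad1_mu}) by the explicit expressions (\ref{L2_gradients}). The derivation uses the Lagrangian obtained by adding to $\hat{J}_1(\sigma,\mu)$ the two terms $\int_\Omega v_j\bigl[-\nabla\cdot(D\nabla u_j)+\sigma u_j+\mu u_j^2\bigr]\,dx$ for $j=1,2$, where $v_1,v_2$ are Lagrange multipliers attached to the PDE constraints. Setting the derivative with respect to $u_j$ to zero, integrating by parts, and invoking Lemma \ref{differentiable_constraint} to guarantee differentiability of the state $u_j$ with respect to $(\sigma,\mu)$, yields the adjoint equations (\ref{adj_1})--(\ref{adj_2}); the linearized operator $-\nabla\cdot(D\nabla\,\cdot\,)+(\sigma^*+2\mu^*u_j)(\cdot)$ is coercive on $H^1_0(\Omega)$ because $u_j\geq 0$ by Theorem \ref{th:existence of solution} and $\sigma^*,\mu^*$ are bounded below by positive constants, so $v_j\in H^1_0(\Omega)$ is uniquely determined. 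Differentiating the Lagrangian with respect to $\sigma$ and $\mu$, and identifying the $H^1$-regularization contributions via duality, produces the reduced gradient expressions (\ref{L2_gradients}).

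Third, I would collapse the multiplier conditions (\ref{comp_st_sigma})--(\ref{comp_end_mu}) into the compact form appearing in (\ref{eq:optimality}). Setting $c_i^*=\lambda_i^*+\lambda_{i,b}^*-\lambda_{i,a}^*$ as in (\ref{eq:c}) converts (\ref{grad1_sigma})--(\ref{grad1_mu}), after substitution of (\ref{L2_gradients}), into the two gradient equations of (\ref{eq:optimality}); Lemma \ref{Comp_conditions} then shows that the pointwise max/min formulas (\ref{lambda_comp}) for $\lambda_i^*,\lambda_{i,a}^*,\lambda_{i,b}^*$ in terms of $c_i^*$ are equivalent to the two scalar equations $E_1(\sigma^*,c_1^*)=0$ and $E_2(\mu^*,c_2^*)=0$. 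Assembling the forward PDEs (the constraints of (\ref{eq:minimization problem})), the adjoint equations (\ref{adj_1})--(\ref{adj_2}), the gradient equations, and these two complementarity equations gives exactly the system (\ref{eq:optimality}).

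The main obstacle I expect is the rigorous derivation of the adjoint equations and reduced gradient formulas by the Lagrangian approach: one has to verify that every duality pairing makes sense in the chosen spaces, that the linearized state equation is well-posed so that the implicit function theorem underpinning Lemma \ref{differentiable_constraint} applies, and that the coercivity of the adjoint operator holds uniformly over the admissible sets $L^\sigma_{ad}\times L^\mu_{ad}$. Since $u_j\in H^2(\Omega)$ embeds into $L^\infty(\Omega)$ in dimension two, these technicalities are tractable but must be handled carefully; once they are in place, the proof reduces to a reassembly of statements already established earlier in the section.
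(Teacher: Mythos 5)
Your proposal follows essentially the same route as the paper: the paper presents Proposition \ref{nec_opt} as a direct reassembly of Proposition \ref{necessary}, the reduced gradients \eqref{L2_gradients} obtained by the adjoint approach, the definition \eqref{eq:c} of $c_i^*$, and Lemma \ref{Comp_conditions}, which is precisely the three-step synthesis you describe. The additional detail you supply on the Lagrangian derivation of the adjoint equations and the coercivity of the linearized operator is consistent with (and slightly more explicit than) what the paper delegates to the cited references.
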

\section{Numerical schemes for solving the 2PPAT-SR inverse problem}\label{sec:Numerical schemes}
\subsection{Picard type method to solve the forward problem}
In this section we propose a Picard type iterative scheme to solve the semi-linear boundary value problem \eqref{eq: modified semi-linear equation}. The algorithm is given as follows
\begin{algorithm}[Picard-type algorithm]\label{Picard_algo}\
\begin{enumerate}
\item  \textbf{\textup{Input}}: Initial guess $u_0$, $D$, $\sigma$, $\mu$, $g$, $N$ and $TOL$\\
   Initialize: $err_0=1$, $k=0$
\item \textbf{\textup{While}} { $err_{k}>TOL$ and $k<N$} do
\item Solve the following linear elliptic boundary value problem
\begin{align*}
-\nabla\cdot(D(x)\nabla{u_{k+1}(x)})+\sigma(x) u_{k+1}(x) + \mu(x) u_{k}(x)u_{k+1}(x)&=0, \quad\qquad   \mbox{in }\Omega, \\
u_{k+1}(x)&=g(x), \quad \mbox{ on } \partial \Omega
\end{align*}
to get $u_{k+1}$ for $k \geq 0$
\item  $err_{k+1} = \|u_{k+1}-u_k\|_2$
 \item $k=k+1$
\item end
\end{enumerate}
\end{algorithm}
We now show the convergence of the Picard algorithm \ref{Picard_algo} to the solution of \eqref{eq: semi-linear equation}.
\begin{theorem}
Let $D, \sigma,\mu$ be non-negative functions in $L^\infty(\O)$ and $g$ be non-negative function in $C^0(\partial \O)$. Then the iterative sequence $\{u_k\}$, we obtained from the above Picard's method, converges in $H^1(\O)$ and the limit $u$ is a solution of the following  semi-linear elliptic boundary value problem
\begin{equation*}
\begin{aligned}
-\nabla\cdot(D(x)\nabla{u(x)})+\sigma(x) u(x) + \mu(x) u^2(x)&=0, \quad\qquad   \mbox{in }\Omega, \\
u(x)&=g(x), \quad \mbox{ on } \partial \Omega.
\end{aligned}
\end{equation*}
\end{theorem}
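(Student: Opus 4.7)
The plan is to combine uniform a priori bounds with a monotone bracketing of the even and odd iterates, and then to use Rellich compactness to pass to the limit in the quadratic nonlinearity. I assume for concreteness that the initial guess satisfies $0 \le u_0 \le M$, where $M := \|g\|_{L^\infty(\de\O)}$ (the choice $u_0 \equiv 0$ is admissible). Since $\sigma + \mu u_{k-1} \ge \sigma > 0$ in $\O$ and stays bounded in $L^\infty$, each linear boundary value problem appearing in Algorithm \ref{Picard_algo} is uniformly elliptic with a non-negative, bounded zeroth-order coefficient, so standard linear theory produces a unique iterate $u_{k+1} \in H^1(\O)$ at every step.

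First I would establish $L^\infty$ and $H^1$ bounds uniform in $k$. The weak maximum principle applied inductively to the operator $-\n\cdot(D\n\,\cdot\,) + (\sigma + \mu u_{k-1})(\,\cdot\,)$ yields $0 \le u_k \le M$ for all $k$. A standard energy estimate (testing the linear PDE against $u_{k+1}$ minus a fixed $H^1$-extension of $g$) then gives $\sup_k \|u_k\|_{H^1(\O)} \le C$, with $C$ depending only on the data.

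The heart of the argument is the monotone bracketing. Subtracting the linear equations for $u_1$ and $u_2$ yields
\[
-\n\cdot(D\n(u_1 - u_2)) + \sigma(u_1 - u_2) = \mu u_1 u_2 \ge 0,
\]
with zero boundary trace, so $u_2 \le u_1$ by the maximum principle. Analogous subtractions give $u_3 \le u_1$ and $u_3 \ge u_2$, and a straightforward induction delivers the alternating chain
\[
0 = u_0 \le u_2 \le u_4 \le \cdots \le u_{2k} \le u_{2k+1} \le \cdots \le u_3 \le u_1 \le M.
\]
Both monotone, bounded subsequences $(u_{2k})$ and $(u_{2k+1})$ therefore converge pointwise a.e.\ and, by dominated convergence, in every $L^p(\O)$, $1 \le p < \infty$, to limits $u_*$ and $u^*$ satisfying $0 \le u_* \le u^* \le M$.

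Finally, using the uniform $H^1$ bound I would extract weak-$H^1$ subsequential limits (which must coincide with $u_*$ and $u^*$) and upgrade to strong $L^p$ convergence via the Rellich compact embedding, which in two dimensions holds for every finite $p$. This strong convergence is what lets me pass to the limit in the bilinear term $\mu u_{k-1} u_k$ in the weak formulation; sending $k \to \infty$ along even and odd indices respectively gives
\[
-\n\cdot(D\n u^*) + \sigma u^* + \mu u_* u^* = 0 \quad \text{and} \quad -\n\cdot(D\n u_*) + \sigma u_* + \mu u^* u_* = 0,
\]
both with boundary trace $g$. Subtracting these identities, the nonlinear terms cancel and $w := u^* - u_* \ge 0$ satisfies $-\n\cdot(D\n w) + \sigma w = 0$ with $w|_{\de\O} = 0$, which forces $w \equiv 0$. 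Hence the entire sequence $\{u_k\}$ converges in $H^1(\O)$ to $u := u_* = u^*$, which by Theorem \ref{th:existence of solution} is the unique solution of the semi-linear problem. The main obstacle I anticipate is passing to the limit in the quadratic nonlinearity $\mu u_{k-1} u_k$: weak $H^1$ convergence of the iterates is not enough by itself, so the argument really leans on combining the uniform $H^1$ bound with the alternating monotone bracketing, which simultaneously supplies the strong $L^p$ convergence needed and forces the two subsequential limits to coincide.
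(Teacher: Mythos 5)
Your proof is correct in its essentials but takes a genuinely different route from the paper's. The paper argues by contraction: using elliptic regularity and an $L^\infty$ bound on the iterates it derives $\|u_{k+1}-u_k\|_{H^1(\O)} \le \gamma\,\|u_k-u_{k-1}\|_{H^1(\O)}$ with $\gamma = C\|\mu\|_{L^\infty}\|g\|_{L^\infty}$, and then must assume $\gamma<1$ (``by choosing appropriate $g$ and $\mu$''), a smallness condition absent from the theorem statement. You instead exploit the fact that the Picard map $T(v)$ (solve the linear problem with zeroth-order coefficient $\sigma+\mu v$) is order-reversing on non-negative inputs, so the even and odd iterates bracket each other monotonically; this removes the smallness condition entirely, and your identification of the limit is clean because the two limit equations carry the \emph{same} product $\mu u_* u^*$, so subtracting them leaves a linear homogeneous problem forcing $u_*=u^*$. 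Three repairs are needed. First, the interlacing chain as you derive it uses that the $u_1$-equation carries no $\mu u_0 u_1$ term, i.e.\ it genuinely requires $u_0\equiv 0$ (or the inductive seed $u_0\le u_2\le u_1$); for a general initial guess in $[0,M]$ the first comparison $u_2\le u_1$ can fail. Second, you need $D$ bounded below by a positive constant for solvability, the weak maximum principle and the energy estimates; the stated hypothesis ``$D$ non-negative in $L^\infty$'' is not literally sufficient, though uniform ellipticity is clearly intended throughout the paper. Third, strong $H^1(\O)$ convergence of the full sequence does not follow from weak-$H^1$ plus strong-$L^p$ convergence alone; it needs one more energy identity, e.g.\ testing the equation satisfied by $u_{k+1}-u\in H^1_0(\O)$ against itself,
\begin{equation*}
\int_\O D|\n(u_{k+1}-u)|^2 + (\sigma+\mu u_k)\,(u_{k+1}-u)^2\,dx \;=\; -\int_\O \mu\, u\,(u_k-u)(u_{k+1}-u)\,dx,
\end{equation*}
whose right-hand side tends to $0$ by the uniform $L^\infty$ bound and the $L^2$ convergence you already have, giving $\n u_{k+1}\to \n u$ in $L^2(\O)$. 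With these additions your argument is complete and, unlike the paper's, needs no smallness of $\mu$ or $g$.
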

\begin{proof}By completeness of $H^1(\O)$ to show the convergence of  sequence $\{u_k\}$ in $H^1(\O)$, we only need to show that the sequence $\{u_k\}$ is a Cauchy sequence in $H^1(\O)$.  To achieve this goal, we will show the following contraction type relation for any $k \geq 1$
\begin{align*}
\|u_{k+1}-u_k\|_{H^1(\O)}\leq \g \|u_{k}-u_{k-1}\|_{H^1(\O)}\leq \cdots \leq \g^k \|u_{1}-u_0\|_{H^1(\O)}, \quad \mbox{ for some } \g < 1.
\end{align*}
We start with $u_2$ and $u_1$, recall from above Picard's type algorithm \ref{Picard_algo} that the iterates $u_1$ and $u_2$ satisfy the following two BVP's respectively
\begin{align}
\begin{array}{rl}
-\nabla\cdot(D(x)\nabla{u_1}(x))+\sigma(x) u_1(x) + \mu(x) u_0(x)u_1(x)&=0, \quad\qquad   \mbox{in }\Omega, \\
u_1(x)&=g(x), \quad \mbox{ on } \partial \Omega.
\end{array} \label{eq: equation for u1}\\
\begin{array}{rl}
-\nabla\cdot(D(x)\nabla{u_1}(x))+\sigma(x) u_2(x) + \mu(x) u_1(x)u_2(x)&=0, \quad\qquad   \mbox{in }\Omega, \\
u_2(x)&=g(x), \quad \mbox{ on } \partial \Omega.
\end{array}\label{eq: equation for u2}
\end{align}
Then by direct substitution, we see that the difference  $\bar{u} = u_2 - u_1$  solves  
\begin{align*}
-\nabla\cdot(D(x)\nabla{\bar{u}}(x))+\sigma(x) \bar{u}(x) + \mu(x)\bar{u}&=\mu u_2(u_0 -u_1), \quad\qquad   \mbox{in }\Omega, \\
\bar{u}(x)&=0, \quad \quad\qquad  \quad\qquad  \mbox{ on } \partial \Omega.
\end{align*}
With the help of regularity estimates for elliptic boundary value problem, we get 
$$ \|\bar{u}\|_{H^1(\O)} \leq  \|\bar{u}\|_{H^2(\O)} \leq C \|\mu u_2(u_0 -u_1)\|_{L^2(\O)}.$$ 
Consider the right hand side of the above inequality
\begin{align*}
 \|\mu u_2(u_0 -u_1)\|_{L^2(\O)}  &=  \left(\int_\O|\mu|^2 |u_2|^2|u_0 -u_1|^2 dx\right)^{\frac{1}{2}}\\
 &\leq \underbrace{\| \mu\|_{L^\infty}\| g\|_{L^\infty}}_{\widetilde{C}} \|(u_0 -u_1)\|_{L^2(\O)}. 
\end{align*}
Using this inequality, we have 
$$ \|u_2-u_1\|_{H^1(\O)} \leq  \|u_2 -u_1\|_{H^2(\O)} \leq\underbrace{ C \widetilde{C}}_{\g} \|(u_0 -u_1)\|_{L^2(\O)}\leq \g \|(u_0 -u_1)\|_{H^1(\O)}.$$ 
By exactly same argument, we get
\begin{align*}
\|u_{k+1}-u_k\|_{H^1(\O)}\leq \g \|u_{k}-u_{k-1}\|_{H^1(\O)}.
\end{align*}
Thus, we have the required relation
$$ \|u_{k+1}-u_k\|_{H^1(\O)}\leq \g^k \|(u_0 -u_1)\|_{H^1(\O)}.$$ 
We can make $\g<1$ by choosing appropriate $g$ and $\mu$.  Hence, the sequence $\{u_k\}$ is a Cauchy sequence in $H^1(\O)$ and hence converges to a limit $u$ in $H^1(\O)$.
To complete the proof of our theorem, the only thing remain to show is that $u$ solve  
\begin{equation*}
\begin{aligned}
-\nabla\cdot(D(x)\nabla{u(x)})+\sigma(x) u(x) + \mu(x) u^2(x)&=0, \quad\qquad   \mbox{in }\Omega, \\
u(x)&=g(x), \quad \mbox{ on } \partial \Omega.
\end{aligned}
\end{equation*}
We know each $u_k$ satisfies
$$ \int_\O D \nabla u_k\cdot  \nabla \vp dx + \int_\O \sigma u_k \vp dx+\int_\O \mu u_{k-1}u_k \vp dx = 0, \quad \mbox{ for all } \quad \vp \in C^\infty_0(\O).$$
The  convergence of $u_{k}\longrightarrow  u$ in $H^1(\O)$ implies the convergence $ \nabla u_k \longrightarrow\nabla u$ in $L^2(\O)$ and the convergence $\sigma u_{k}\longrightarrow  \sigma u$ in $H^1(\O)$ as $k \rightarrow \infty$. Additionally, the strong convergence of $\{u_k\}$ in $H^1(\O)$ will guarantee the weak convergence of $u_{k-1}u_k  \rightharpoonup  u^2$ in $L^2(\O)$.  Thus we have 
$$ \int_\O D \nabla u_k\cdot  \nabla \vp dx + \int_\O \sigma u_k \vp dx+\int_\O \mu u_{k-1}u_k \vp dx \longrightarrow \int_\O D \nabla u\cdot  \nabla \vp dx + \int_\O \sigma u \vp dx+\int_\O \mu u^2 \vp dx $$
for all $\vp \in C^\infty_0(\O)$. 
Therefore 
$$\int_\O D \nabla u\cdot  \nabla \vp dx + \int_\O \sigma u \vp dx+\int_\O \mu u^2 \vp dx = 0, \quad \mbox{ for all } \quad \vp \in C^\infty_0(\O). $$
This completes the proof of the theorem. 
\end{proof}

\subsection{Variable inertial proximal method for solving the optimality system}
For solving the optimality system \eqref{eq:optimality}, we use a class of iterative schemes known as the proximal method. The fundamental idea behind a proximal scheme is to minimize an upper bound of the objective function $\hat{J}$, instead of directly minimizing the functional. This is done using a proximal operator that involves a gradient update of the minimizer. The upper bound is given in terms of the Lipschitz constant  $L$ for the gradient of the functional $\hat{J}_1$. In a special type of proximal method, the exact value of $L$ is not computed directly. Instead an upper bound for $L$ is computed at each iterative step that leads to a fixed step size in the gradient update, known as the inertial parameter. The resulting scheme is known as the variable inertial proximal method (VIP) \cite{Roy_CDII} and has nice convergent properties. We summarize the VIP scheme in the algorithm below as given in \cite{Roy_CDII}

\begin{algorithm}[Variable inertial proximal (VIP) method]\label{proximal_algo}\
\begin{enumerate}

\item  Input: $\beta$, $\hat{J}_1$, $\sigma_0=\sigma_{-1}$, $\mu_0=\mu_{-1}$, $TOL$, $n>1$, $L_0>0$\\
   \textbf{Initialize:} $E_1^0=E_2^0=1$, $k=0$, choose $\theta\in(0,1)$ and $c_1 < 2$ and $c_2>0$;
\item While {$\|E_1^{k-1}\|+\|E_2^{k-1}\|>TOL$} do
\item Compute $\nabla_\sigma \hat{J}_1(\sigma_{k},\mu_k)$, $\nabla_\mu \hat{J}_1(\sigma_{k},\mu_k)$ 
\item Backtracking: Find the smallest non-negative integer $i$ such that with \\
\hspace*{3ex}$\tilde{L}=n^{i}L_{k-1}$
\begin{align*}
\hat{J}_1(\tilde{\sigma},\tilde{\mu})&\leq\hat{J}_1(\sigma_{k},\mu_k)+\left< \nabla_\sigma \hat{J}_1(\sigma_{k},\mu_k),\tilde{\sigma}-\sigma_{k}\right>+ \left< \nabla_\mu \hat{J}_1(\sigma_{k},\mu_k),\tilde{\mu}-\mu_{k}\right>\\
&+\frac {\tilde{L}}{2} \left(\|\tilde{\sigma}-\sigma_{k}\|^2+\|\tilde{\mu}-\mu_{k}\|^2 \right)
\end{align*}
\hspace*{3ex}
where $\tilde{\sigma}=\mathbb{S}^{L_{ad}^\sigma}_{\gamma\, s}\left(\sigma_{k}-s\, (\nabla_\sigma \hat{J}_1)_{H^1}(\sigma_{k},\mu_k)+\theta(\sigma_k-\sigma_{k-1})\right)$\\
\hspace*{10ex}$\tilde{\mu}=\mathbb{S}^{L_{ad}^\mu}_{\gamma\, s}\left(\sigma_{k}-s\, (\nabla_\mu \hat{J}_1)_{H^1}(\sigma_k,\mu_{k})+\theta(\mu_k-\mu_{k-1})\right)$,\\
\hspace*{10ex} $s = c_1 (1-\theta)/(\tilde{L}+2c_2)$,
\item  Set $L_k=\tilde{L}$ and $s_k=c_1 (1-\theta)/(L_k+2c_2)$
\item  $\sigma_{k+1}=\mathbb{S}^{L_{ad}\sigma}_{\gamma\, s_k}\left(\sigma_{k}-s_k \, (\nabla_\sigma \hat{J}_1)_{H^1}(\sigma_{k},\mu_k)+\theta(\sigma_k-\sigma_{k-1})\right)$\\
$\mu_{k+1}=\mathbb{S}^{L_{ad}^\mu}_{\gamma\, s_k}\left(\mu_{k}-s_k \, (\nabla_\mu \hat{J}_1)_{H^1}(\sigma_k,\mu_{k})+\theta(\mu_k-\mu_{k-1})\right)$
 \item  $c_1^k=-(\nabla_\sigma \hat{J}_1)_{H^1}(\sigma_{k},\mu_k)$, $c_2^k=-(\nabla_\mu \hat{J}_1)_{H^1}(\sigma_k,\mu_{k})$
 \item  $E_1^{k}=E(\sigma_k,c_1^k)$, $E_2^{k}=E(\mu_k,c_2^k)$
 \item $k=k+1$   
\item end
\end{enumerate}
\end{algorithm}

\section{Numerical results}\label{sec: numerical results}

We first demonstrate the convergence of the Picard scheme given in Algorithm \ref{Picard_algo} for solving \eqref{eq: semi-linear equation}. We use the method of manufactured solutions to construct an exact solution for  \eqref{eq: semi-linear equation} with a non-zero source term $f(x_1,x_2)$ on the right hand side. We set $D(x_1,x_2) = 1.0, ~\sigma(x_1,x_2)= \sin(x_1)\sin(x_2), \mu=1$. Further, we choose $\Omega = (0,1)\times(0,1)$. The boundary condition is given as $g(x_1,x_2) = \sin(x_1)\sin(x_2)$ and the right-hand side $f(x_1,x_2)= 2\sin(x_1)\sin(x_2) + 2(\sin(x_1)\sin(x_2))^2.$ With the preceding choices of the parameters, the exact solution is given as $u_{ex} = \sin(x_1)\sin(x_2)$. The solution error is evaluated based on the following discrete $L^1$ norm
 $$
 \|u\|_1=h^2 \sum_{i,j=0}^{N_x}|u_{i,j}|,
 $$
 which we identify with $L^1_h$.
The discrete $L^1$ error is defined as follows
$$
Err= \|u-u_{ex}\|_1.
$$
Table \ref{table:conv} shows the results of experiments that demonstrate the convergence of the Picard algorithm. We see that the resulting order of convergence is $\mathcal{O}(h)$.

\begin{table}[H]
\centering
\begin{tabular}{|c| c| c| c|}
\hline
$N_x$ & $Err$ & Order\\ [0.5ex]
\hline
25  & 1.70e-3  & -- \\
50  & 8.77e-4 & 0.96  \\
100  & 4.39e-4 & 0.99 \\
200  & 2.20e-4 & 1.00 \\[1ex]
\hline
\end{tabular}
\caption{Convergence of the Picard algorithm given in Algorithm \ref{Picard_algo}}
\label{table:conv}
\end{table} 
 
We now present the results of numerical experiments obtained using the VIP scheme to solve the 2PPAT-SR reconstruction problem. We choose our domain in the experiments below as $\Omega = (-1,1)\times(-1,1)$. We discretize $\Omega$ into 150 equally spaced points in both $x$ and $y$ directions. The boundary illuminations for solving \eqref{eq: semi-linear equation} to generate two sets of initial acoustic wave pressure field data are chosen as $g_1(x,y) = 1.0, ~g_2(x,y) = 2.0$. Such a choice of boundary conditions are consistent with Lemma \ref{boundary_conditions} that ensure unique solvability of the 2PPAT-SR reconstruction problem. The background values $\sigma_b$ and $\mu_b$ are chosen to be 0.1 and 0.01 respectively, unless otherwise mentioned and $D$ is chosen to be 0.1$\sigma$ while generating the data with a known $\sigma$. The weights of the functional $J$ given in \eqref{eq: definition of cost functional} are chosen as $\alpha_1=\alpha_2=1,\xi_1=0.01,\xi_2=0.01,\gamma_1=0.1,\gamma_2=0.1.$ The value of the Gr\"{u}neisen coefficient is chosen to be 1.0. To generate the data $G_i^\delta,~ i=1,2$, we first solve for $u_i$ in (\ref{eq: semi-linear equation}) with given test values of $\sigma,\mu$ and boundary illumination data $g_i$ on a finer mesh with $N=400$ using the Picard iterative scheme given in Algorithm \ref{Picard_algo}. We then compute $G_i^\delta$ on the finer mesh using the values of $\sigma,\mu,u_i$ from \eqref{eq:initial pressure field}. Finally, we restrict $G_i^\delta$ onto the coarser mesh with $N=150$ and use this as our given data.

In test case 1,  we consider a phantom represented by a disk centered at $(0.25,0.25)$ and having radius 0.25. The value of $\sigma$ inside the disk is 1 and outside is 0. The corresponding value of $\mu$ inside the disk is $0.1$ and outside is 0. The plots of the actual phantoms for $\sigma$ and $\mu$ are shown in Figure \ref{disk}.

\begin{figure}[H]
\centering
\subfloat[Exact $\sigma$]{\includegraphics[width=0.3\textwidth, height=0.25\textwidth]{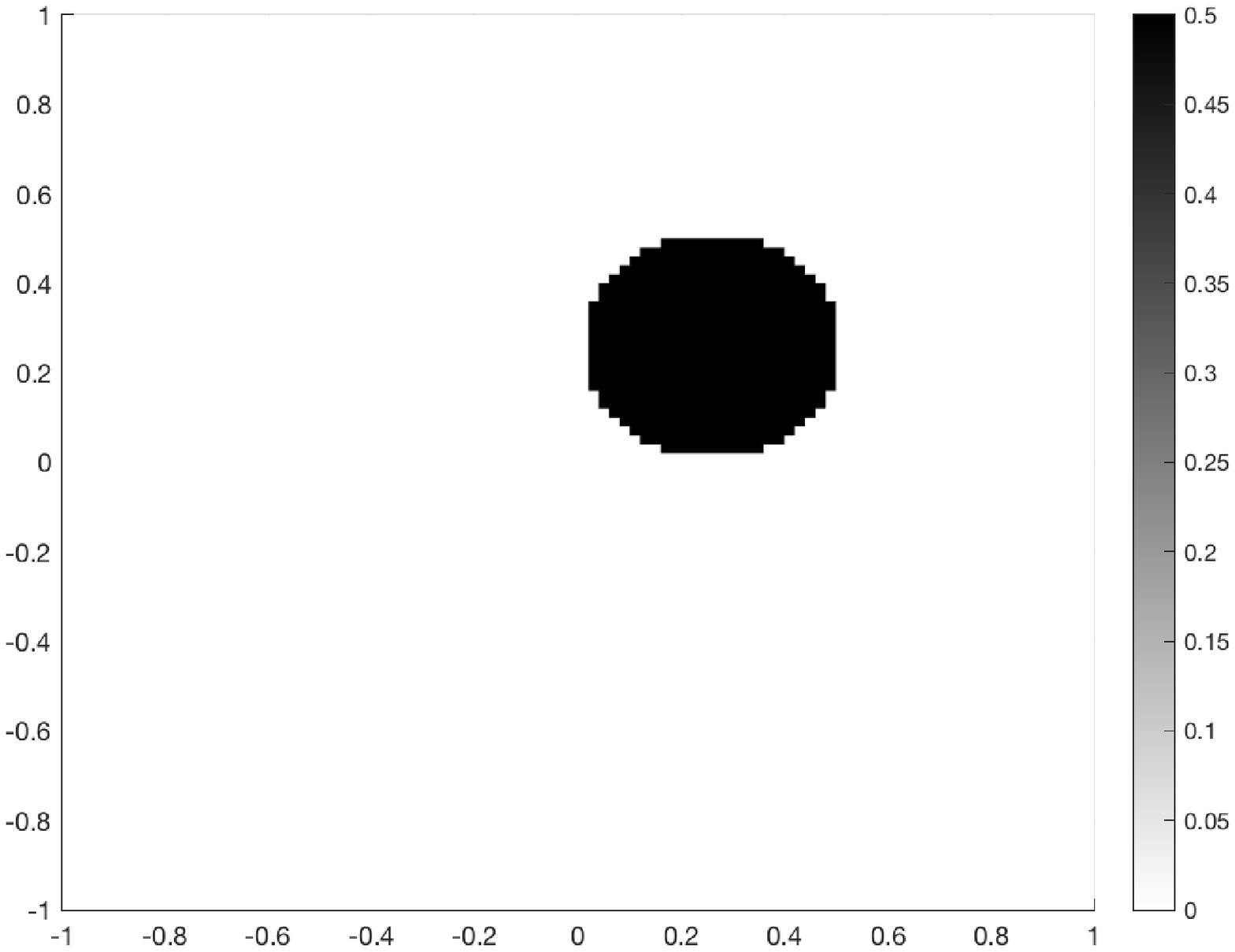}\label{sigma_disk_exact}}\hspace{10mm}
\subfloat[Reconstructed $\sigma$]{\includegraphics[width=0.3\textwidth, height=0.25\textwidth]{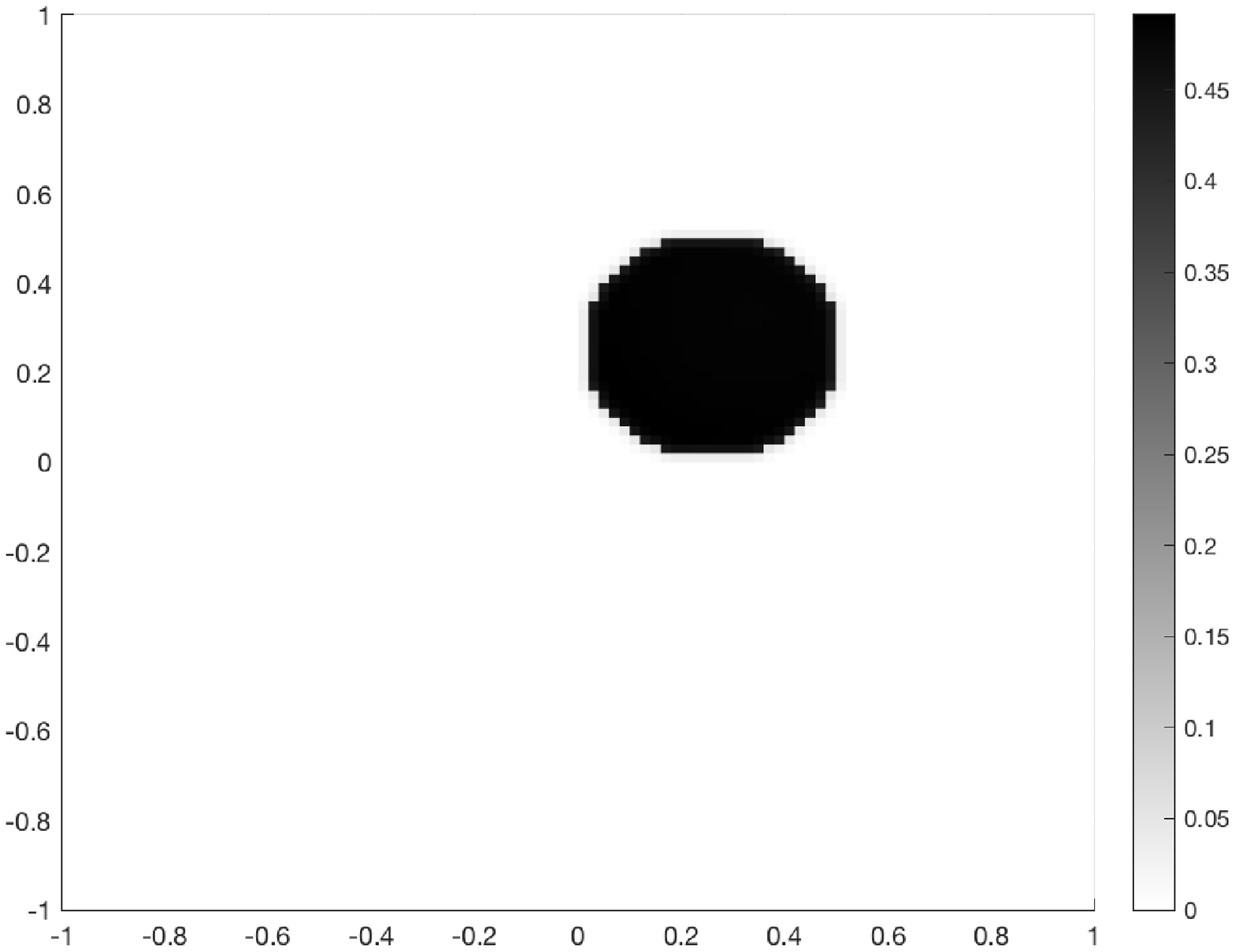}\label{sigma_disk_recon}}\hspace{10mm}\\
\subfloat[Exact $\mu$]{\includegraphics[width=0.3\textwidth, height=0.25\textwidth]{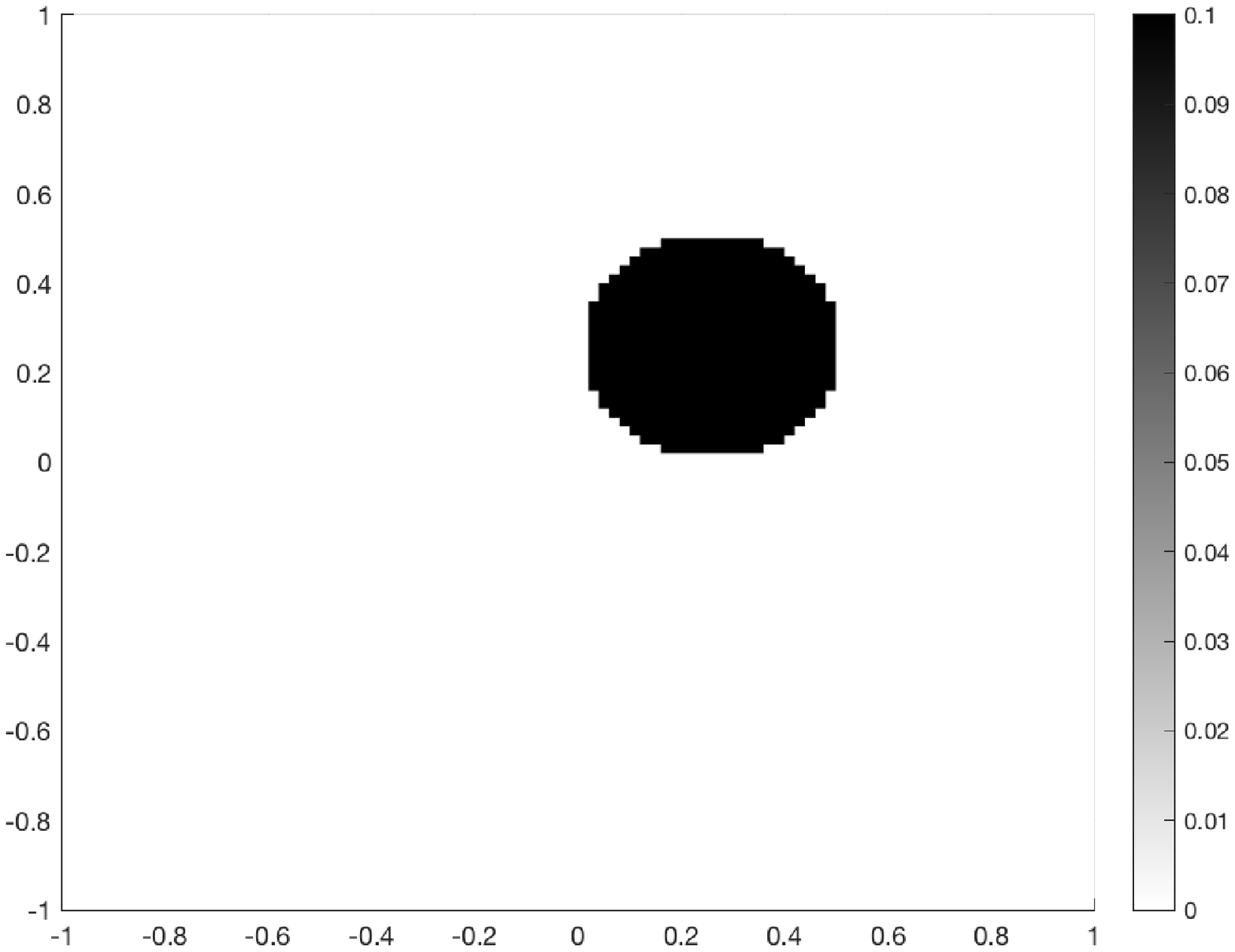}\label{mu_disk_exact}}\hspace{10mm}
\subfloat[Reconstructed $\mu$]{\includegraphics[width=0.3\textwidth, height=0.25\textwidth]{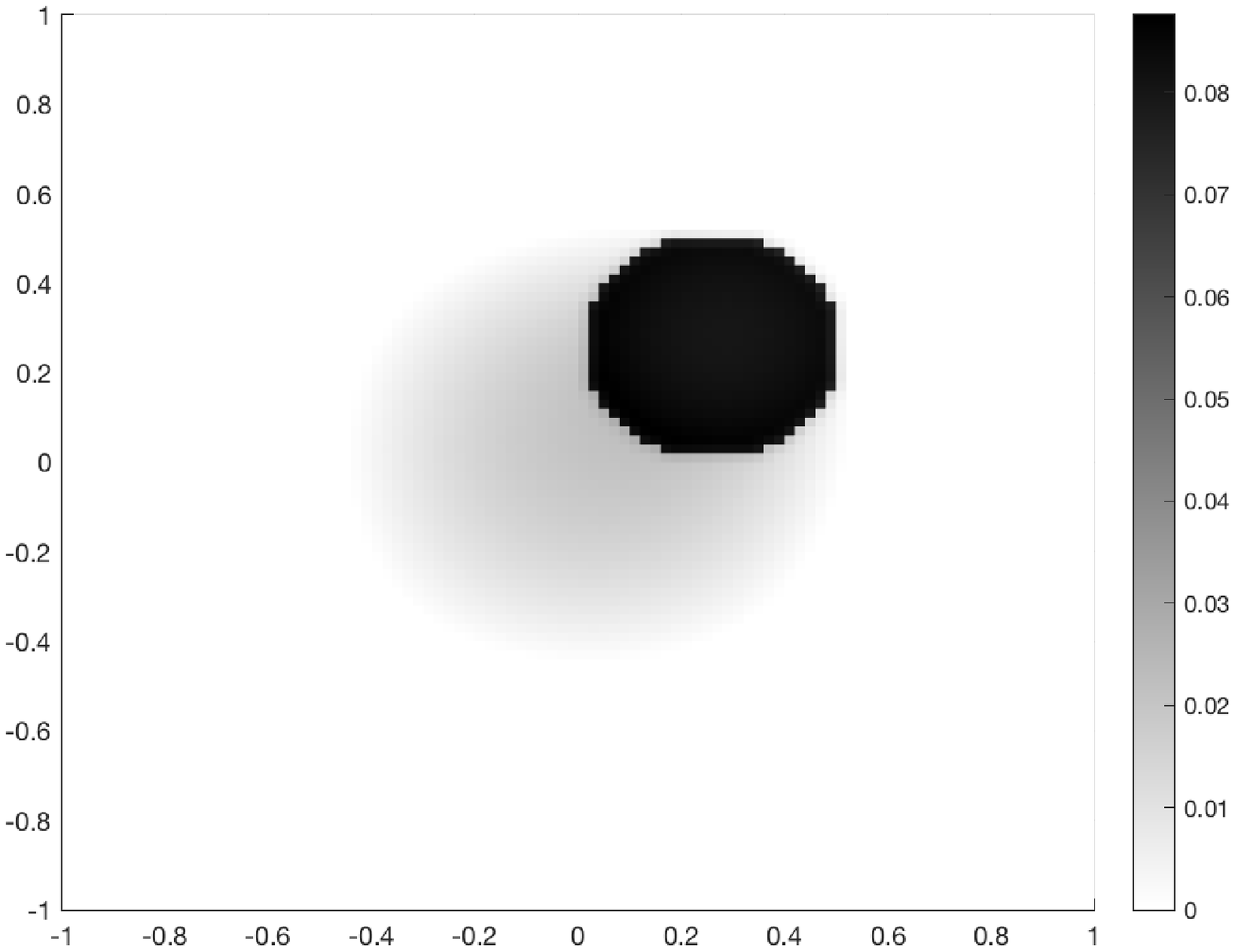}\label{mu_disk_recon}}\hspace{10mm}\\
 \caption{Test Case 1-Reconstructions of the disk phantom with the 2PPAT-SR framework}
    \label{disk}
  \end{figure}
From Figure \ref{sigma_disk_recon} and \ref{mu_disk_recon}, we see that the reconstructions of both $\sigma$ and $\mu$ are of high resolution and high contrast. The value small shaded region around the disk in the reconstruction of $\mu$ is close to 0.02 and, thus, we only encounter a miniscule loss of contrast. 

In test case 2, we consider a heart lung phantom for both $\sigma$ and $\mu$. For $\sigma$, the background value of the phantom is 0 that is perturbed into two ellipses that represent the lungs with value 1 and into a disk representing heart with value 0.5. The value of $\mu$ inside the ellipses and the disk is computed as $\mu=0.1\sigma$. The plots of the exact and the reconstructed phantoms are shown in Figure \ref{heart}.

\begin{figure}[H]
\centering
\subfloat[Exact $\sigma$]{\includegraphics[width=0.3\textwidth, height=0.25\textwidth]{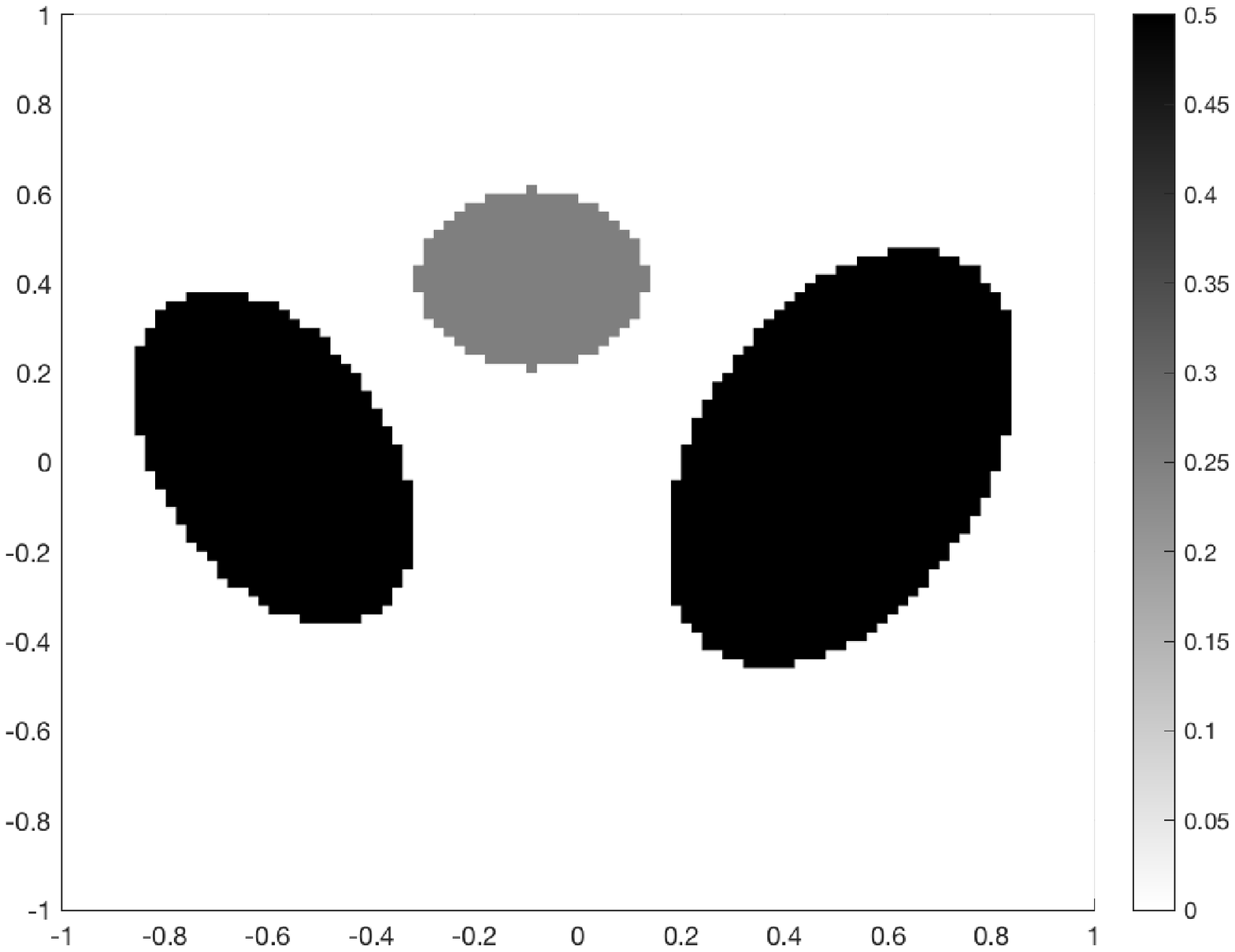}\label{sigma_heart_exact}}
\subfloat[Reconstructed $\sigma$]{\includegraphics[width=0.3\textwidth, height=0.25\textwidth]{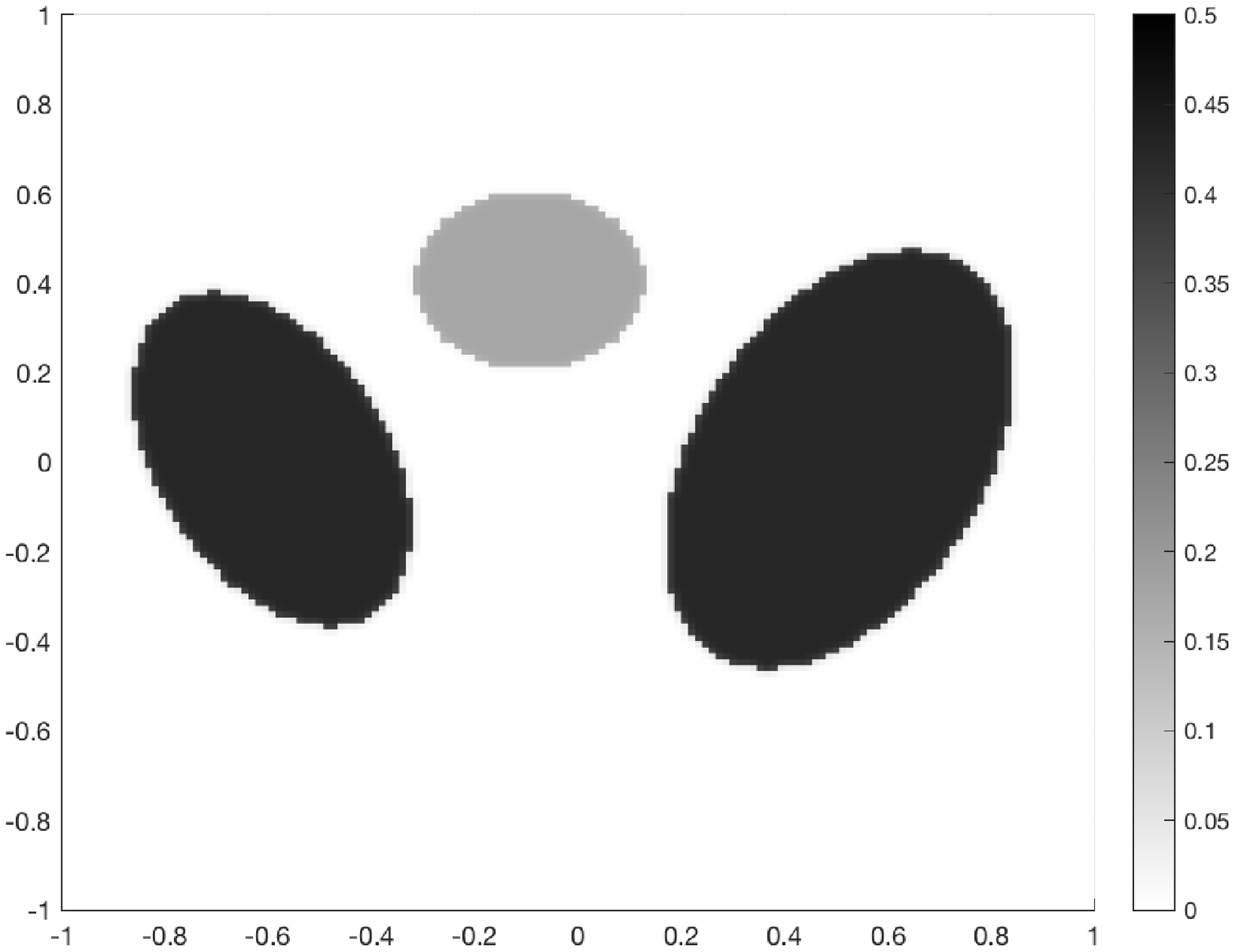}\label{sigma_heart_recon}}
\subfloat[Reconstructed $\sigma$ with 20\%]{\includegraphics[width=0.3\textwidth, height=0.25\textwidth]{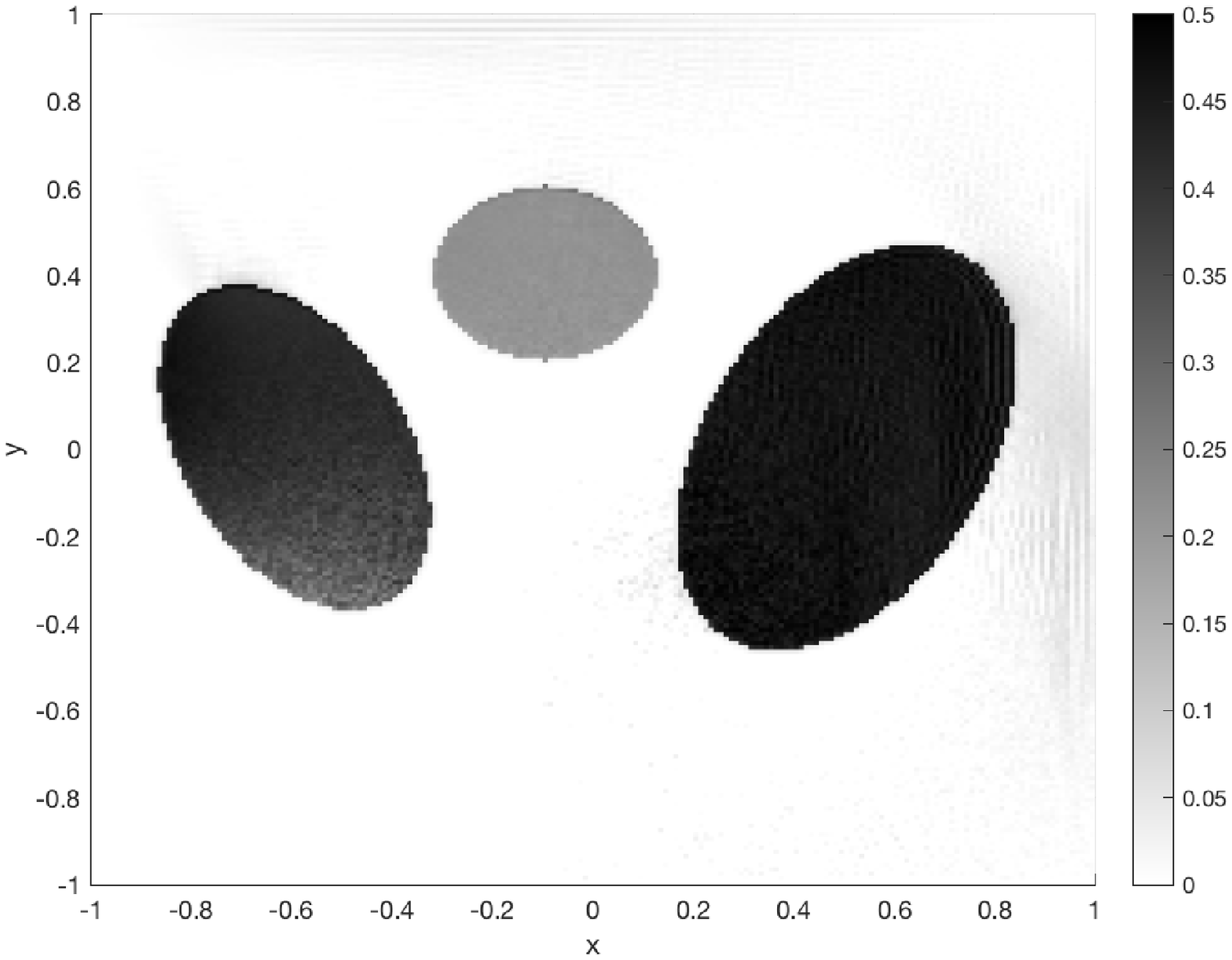}\label{sigma_heart_recon_20}}\\
\subfloat[Exact $\mu$]{\includegraphics[width=0.3\textwidth, height=0.25\textwidth]{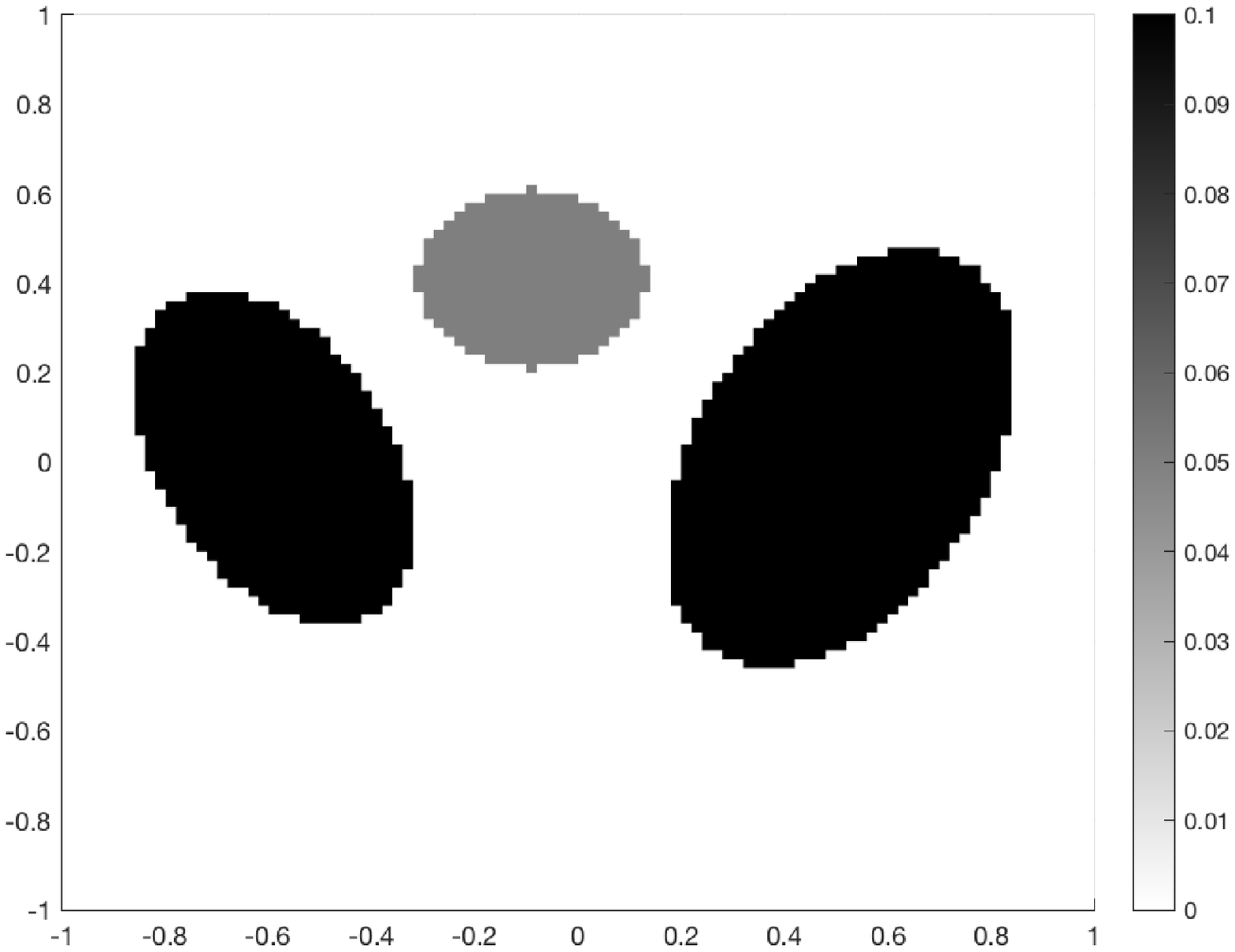}\label{mu_heart_exact}}
\subfloat[Reconstructed $\mu$]{\includegraphics[width=0.3\textwidth, height=0.25\textwidth]{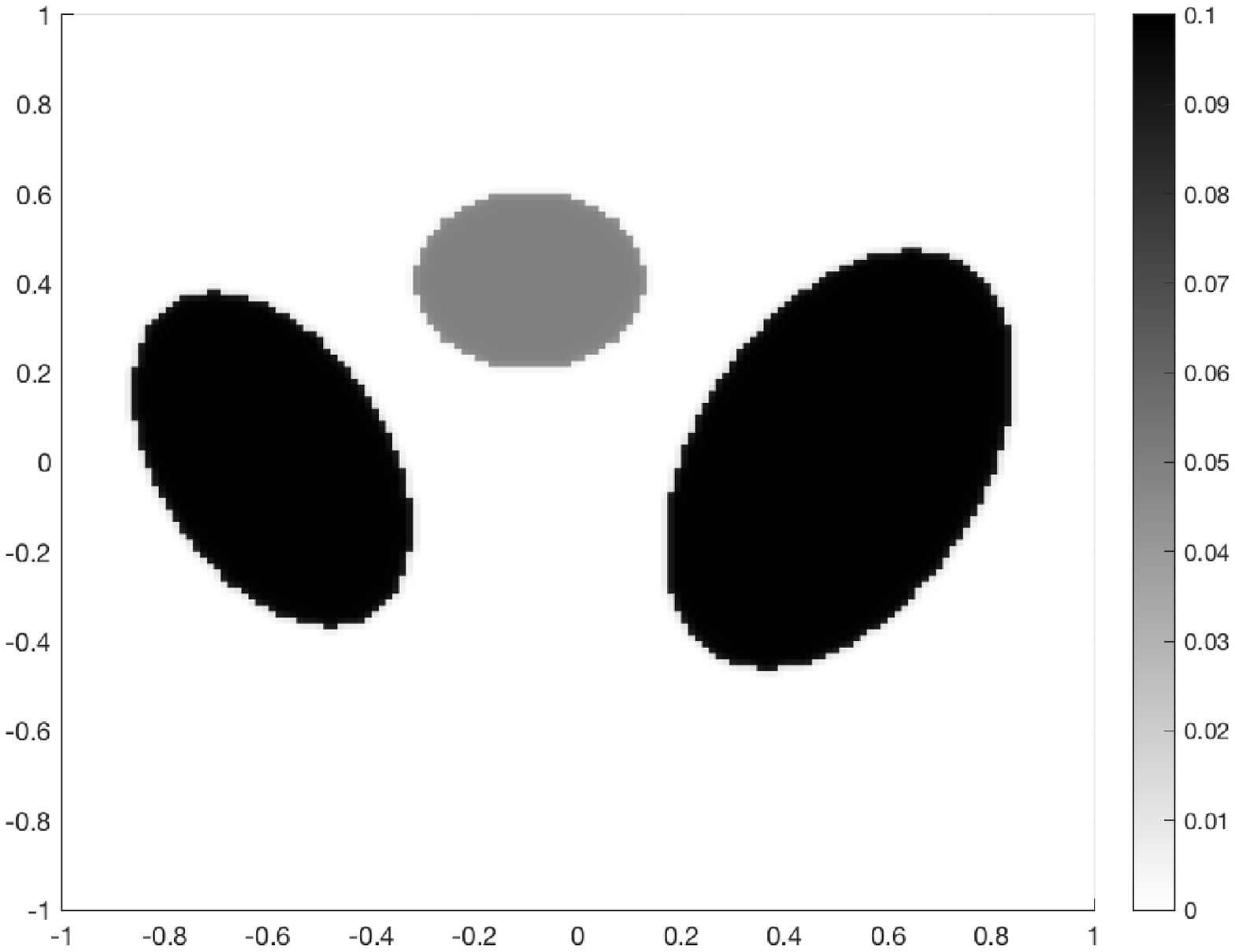}\label{mu_heart_recon}}
\subfloat[Reconstructed $\mu$ with 20\% noise]{\includegraphics[width=0.3\textwidth, height=0.25\textwidth]{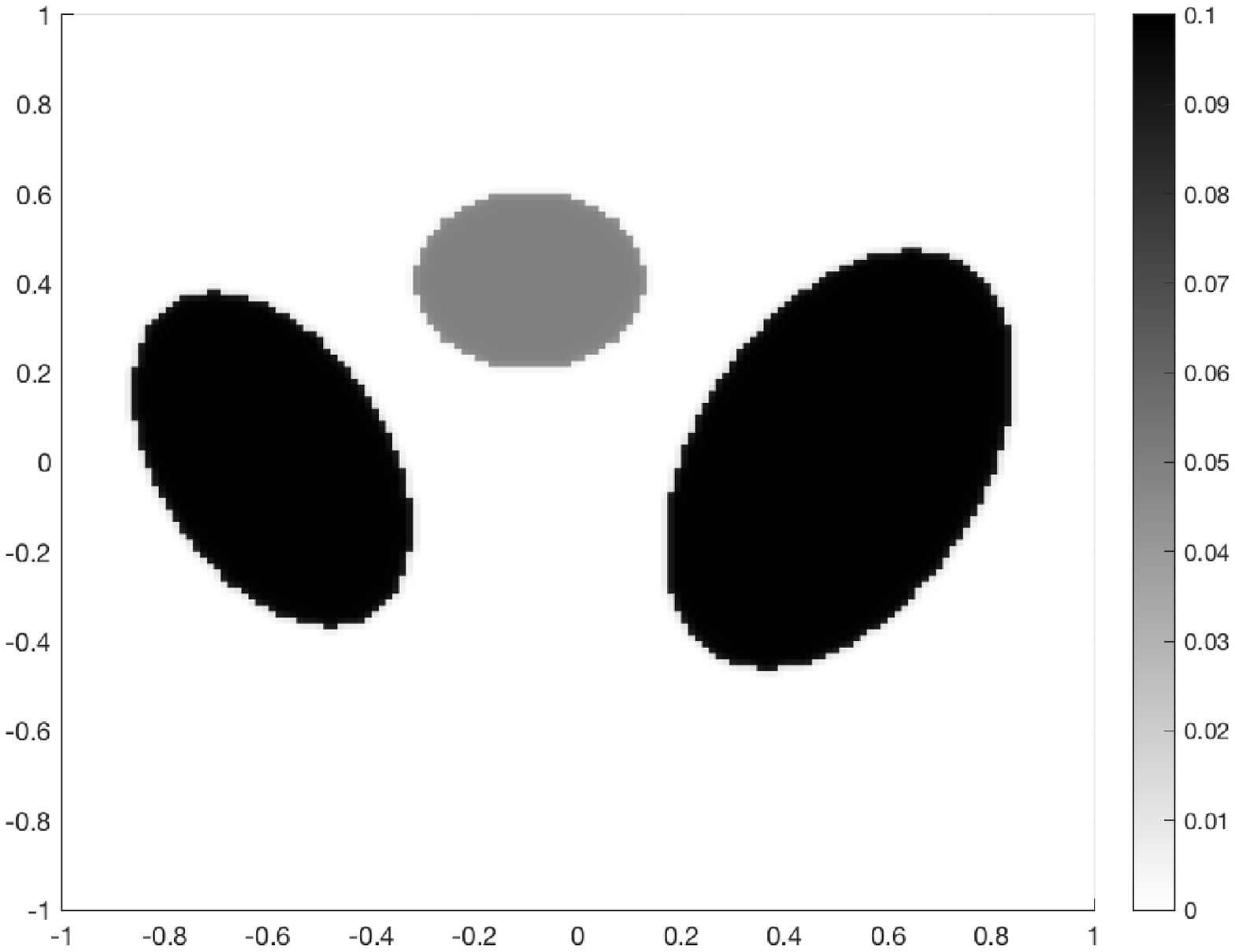}\label{mu_heart_recon_20}}\hspace{10mm}\\
 \caption{Test Case 2-Reconstructions of the heart and lung phantom with the 2PPAT-SR framework} 
    \label{heart}
  \end{figure}
  We again see from Figures \ref{sigma_heart_recon} and \ref{mu_heart_recon} that the reconstructions of $\sigma,\mu$ are of high contrast and high resolution. To test the robustness of our method, we add 20\% multiplicative Gaussian noise to the interior data $\mathcal{H}^{\sigma,\mu}$ and use it for our 2PPAT-SR inversion algorithm. We also modify the value of the regularization parameters $\xi_1=0.1,\xi_2=0.1,\gamma_1=0.3,\gamma_2=0.3$, in order to counter the noisy data. The results can be seen in Figure \ref{sigma_heart_recon_20} and \ref{mu_heart_recon_20}. We see that the reconstruction of $\sigma$ contains a few artifacts but still is of good quality. The reconstruction of $\mu$ demonstrates very little artifacts. This shows that our 2PPAT-SR reconstruction framework is robust and accurate even in the presence of noisy data.

In test case 3, we consider $\sigma$ as the Shepp-Logan phantom given in \cite{Shepp}. The background $\sigma_b$ is chosen to be 0.3 in this case. We compute $\mu=0.1\sigma$ and the background value of $\mu_b$ is chosen as 0.03. The plots of the exact and reconstructed phantoms are shown in Figure \ref{shepp}.

\begin{figure}[H]
\centering
\subfloat[Exact $\sigma$]{\includegraphics[width=0.3\textwidth, height=0.25\textwidth]{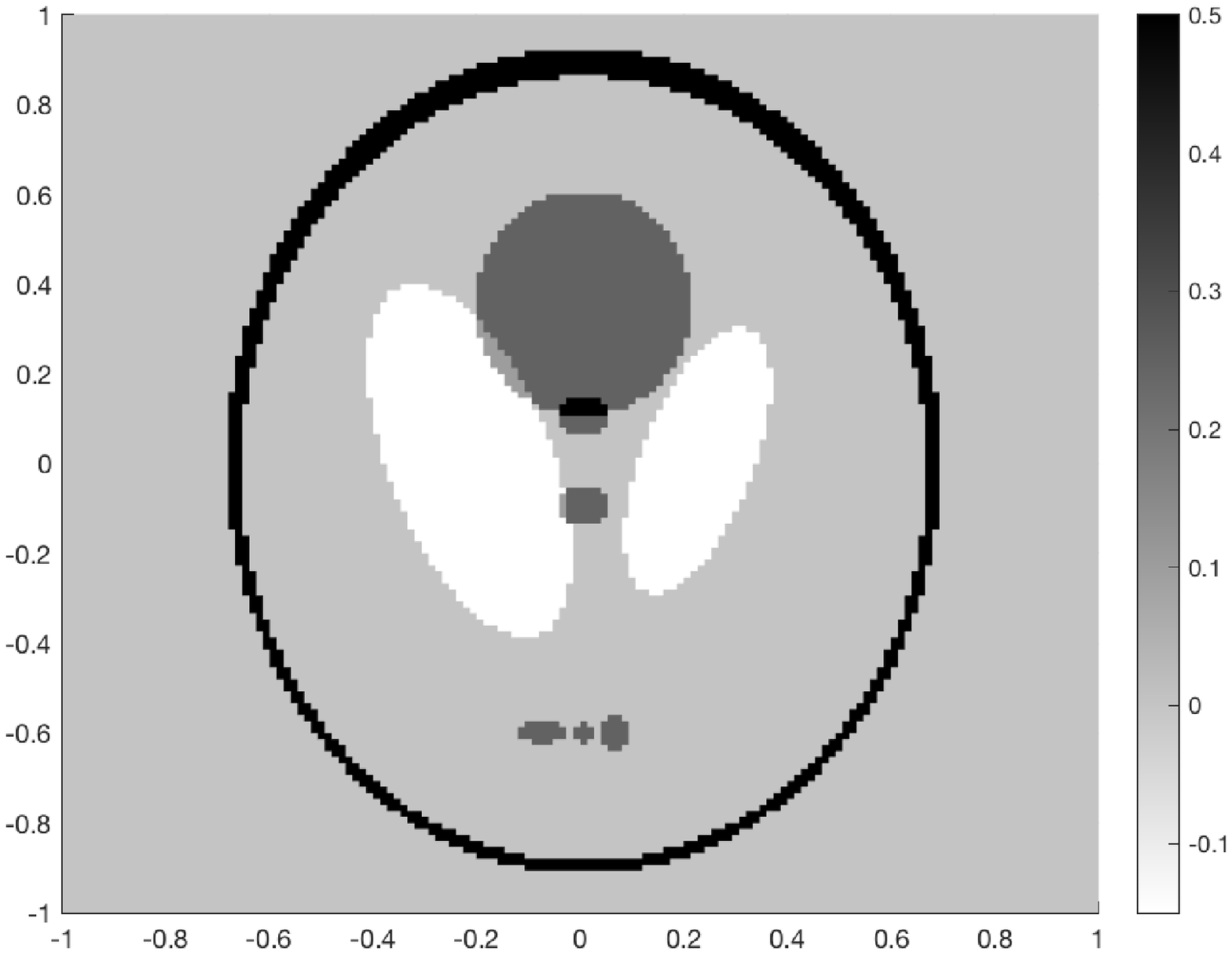}\label{sigma_shepp_exact}}
\subfloat[Reconstructed $\sigma$]{\includegraphics[width=0.3\textwidth, height=0.25\textwidth]{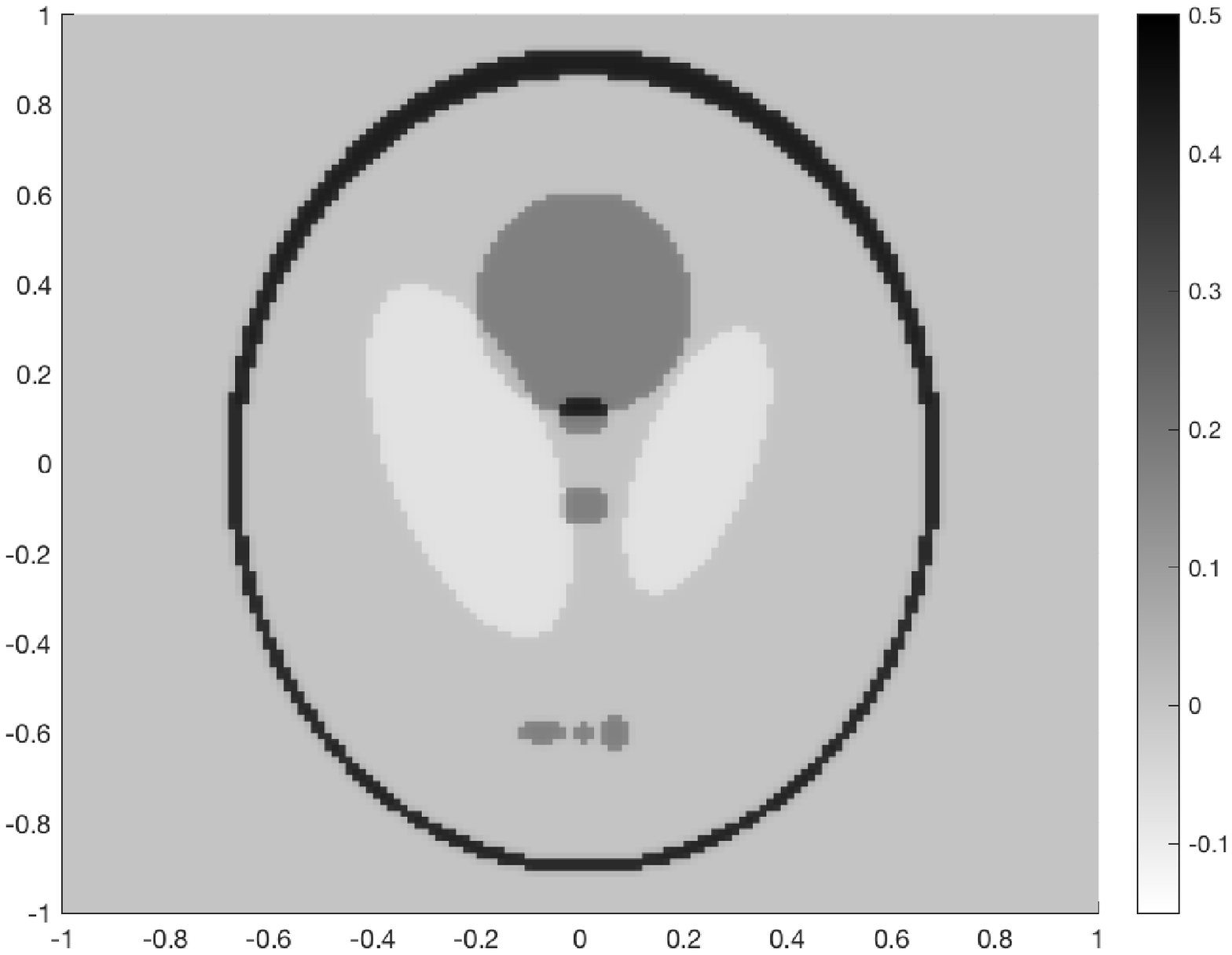}\label{sigma_shepp_recon}}
\subfloat[Reconstructed $\sigma$ with 20\% noise]{\includegraphics[width=0.3\textwidth, height=0.25\textwidth]{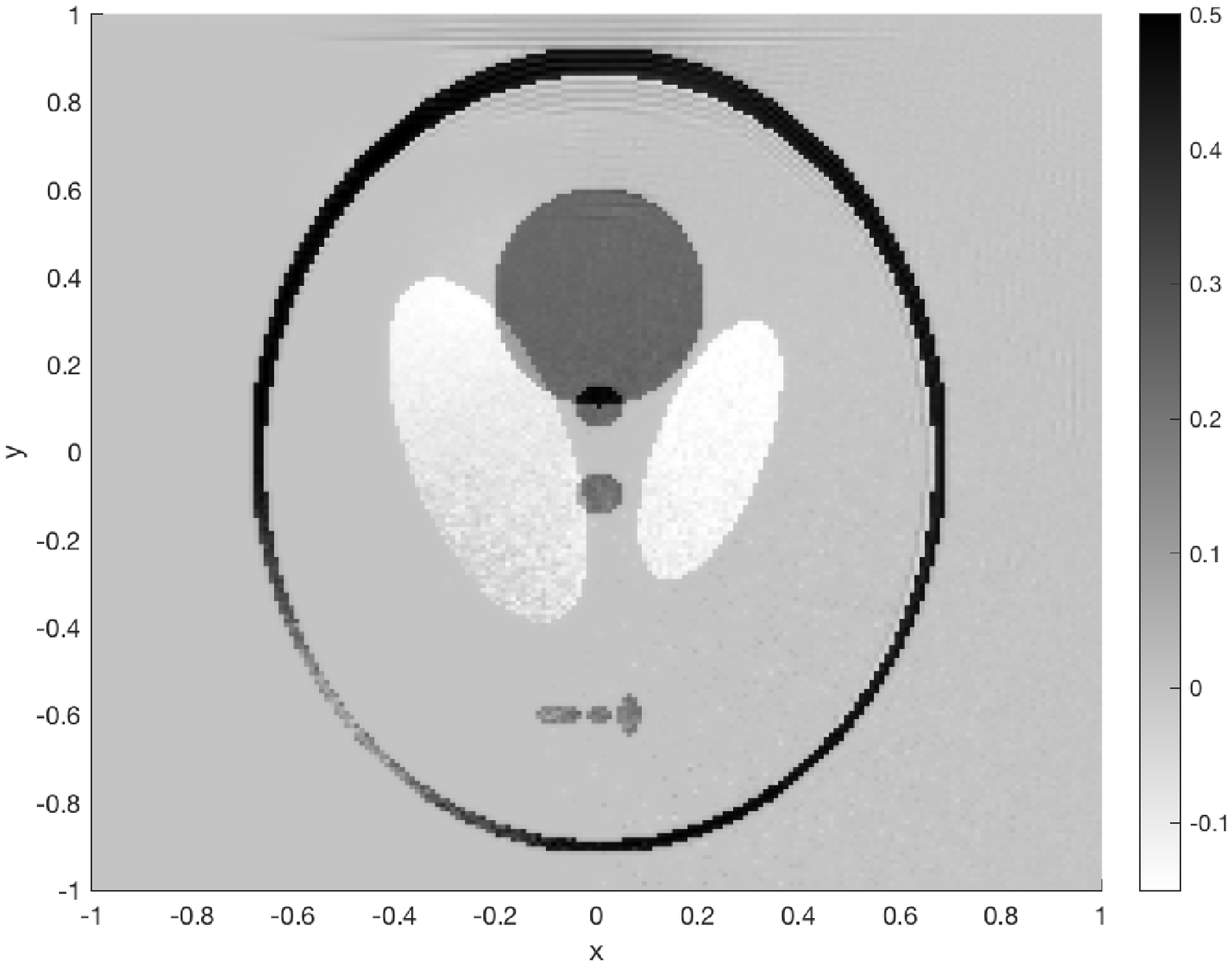}\label{sigma_shepp_recon_20}}\\
\subfloat[Exact $\mu$]{\includegraphics[width=0.3\textwidth, height=0.25\textwidth]{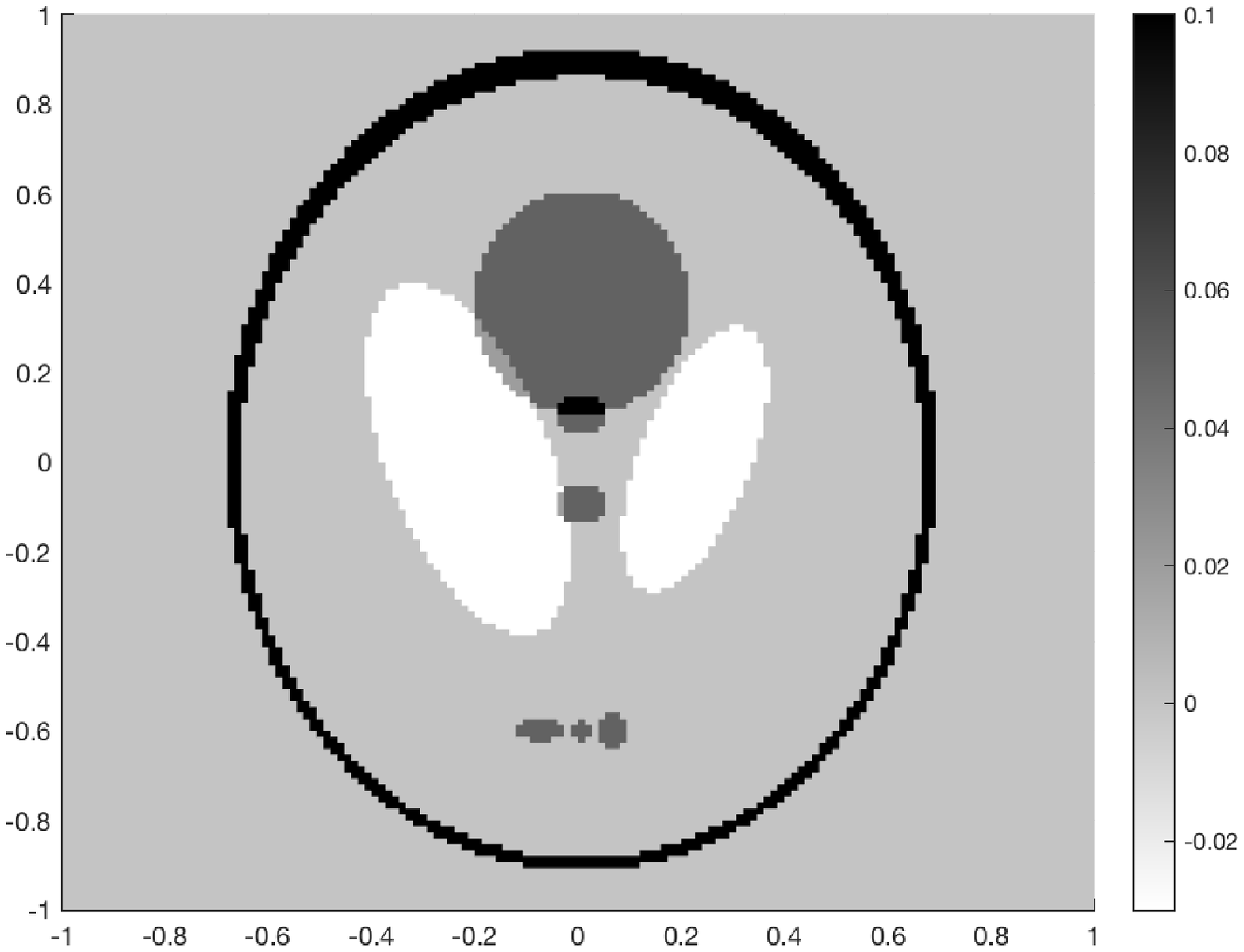}\label{mu_shepp_exact}}
\subfloat[Reconstructed $\mu$]{\includegraphics[width=0.3\textwidth, height=0.25\textwidth]{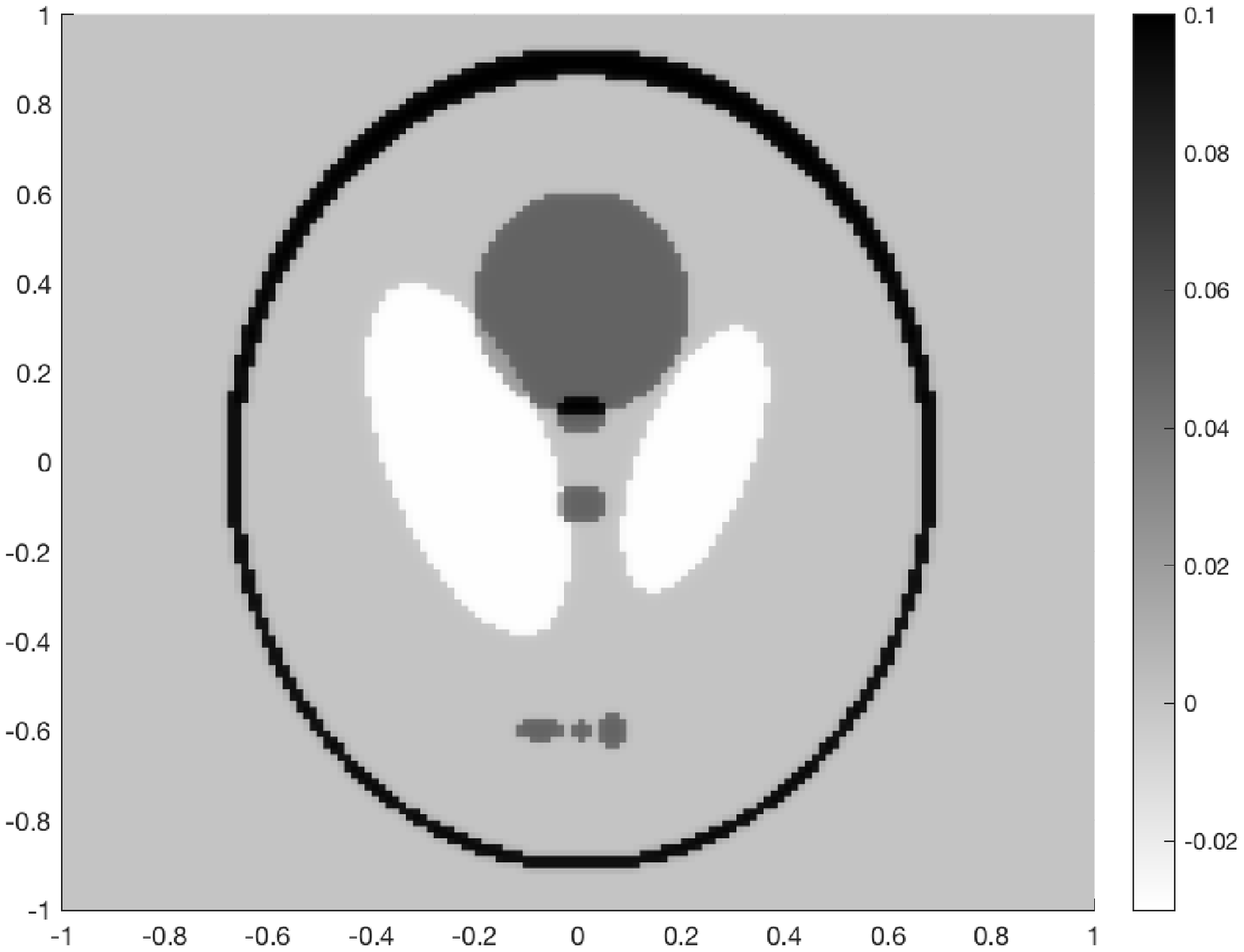}\label{mu_shepp_recon}}
\subfloat[Reconstructed $\mu$ with 20\% noise]{\includegraphics[width=0.3\textwidth, height=0.25\textwidth]{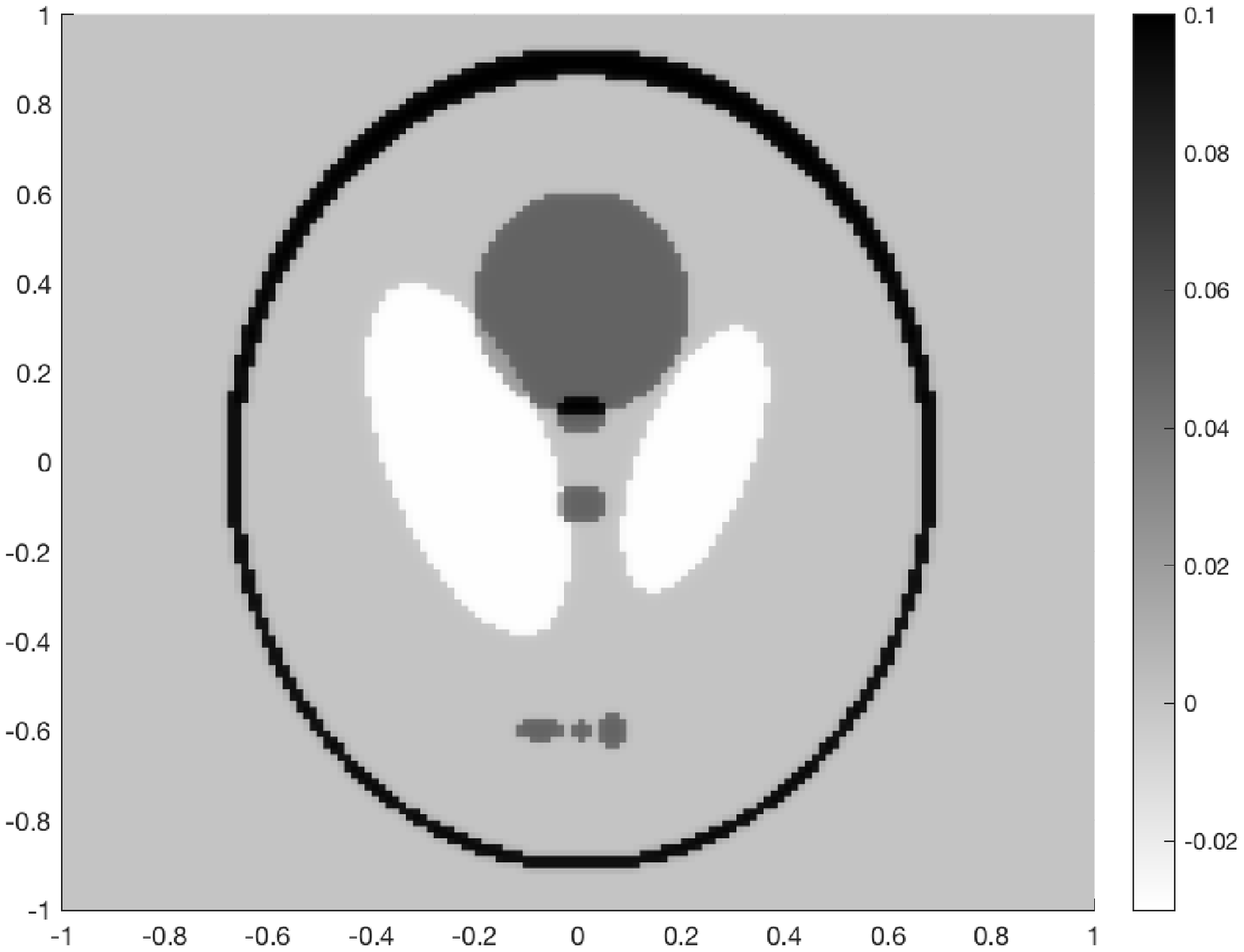}\label{mu_shepp_recon_20}}\hspace{10mm}\\
 \caption{Test Case 3-Reconstructions of the Shepp-Logan phantom with the 2PPAT-SR framework}
    \label{shepp}
  \end{figure}
  We again see from Figures \ref{sigma_shepp_recon} and \ref{mu_shepp_recon} that the 2PPAT-SR reconstruction framework gives superior quality reconstructions even for objects with high contrast values and with holes and inclusions. The reconstructions with 20\% noise in the interior data are shown in Figures \ref{sigma_shepp_recon_20} and \ref{mu_shepp_recon_20} with the modified regularization parameter values as in the previous test case. We see that the reconstructions are still of high quality with very less artifacts.

\section{Conclusions}
In this work, we have presented a new reconstruction framework for determining the optical coefficients in two-photon PAT. The framework comprises of a PDE-constrained optimization problem that promotes sparsity patterns in the reconstructions of the single and two photon absorption coefficients. We present a new theoretical analysis of the existence and uniqueness of a solution to a semi-linear elliptic PDE arising in 2P-PAT. Further, we present a proximal scheme using a Picard solver for the semi-linear PDE and its adjoint to solve the optimization problem. Several numerical results demonstrate that the proposed framework is able to achieve reconstructions with high contrast and high resolution for objects including holes and inclusions.

\section{Acknowledgments}
The authors are grateful to Gaik Ambartsoumian for several fruitful and important suggestions. S. Roy was partly supported by the National Cancer Institute, National Institutes of Health, grant number: 1R21CA242933-01.

%
\end{document}